\definecolor{webgreen}{rgb}{0,.5,0}
\definecolor{webbrown}{rgb}{.6,0,0}
\def\legendre@dash#1#2{\hb@xt@#1{%
  \kern-#2\p@
  \cleaders\hbox{\kern.5\p@
    \vrule\@height.2\p@\@depth.2\p@\@width\p@
    \kern.5\p@}\hfil
  \kern-#2\p@
  }}
\def\@legendre#1#2#3#4#5{\mathopen{}\left(
  \sbox\z@{$\genfrac{}{}{0pt}{#1}{#3#4}{#3#5}$}%
  \dimen@=\wd\z@
  \kern-\p@\vcenter{\box0}\kern-\dimen@\vcenter{\legendre@dash\dimen@{#2}}\kern-\p@
  \right)\mathclose{}}
\newcommand\legendre[2]{\mathchoice
  {\@legendre{0}{1}{}{#1}{#2}}
  {\@legendre{1}{.5}{\vphantom{1}}{#1}{#2}}
  {\@legendre{2}{0}{\vphantom{1}}{#1}{#2}}
  {\@legendre{3}{0}{\vphantom{1}}{#1}{#2}}
}
\def\dlegendre{\@legendre{0}{1}{}}
\def\tlegendre{\@legendre{1}{0.5}{\vphantom{1}}}
\newcommand{\Ccal}{{\mathcal C}}
\newcommand{\Z}{{\mathbb Z}}
 \newtheorem{theorem}{Theorem}[section]
 \newtheorem{corollary}[theorem]{Corollary}
 \newtheorem{lemma}[theorem]{Lemma}
 \theoremstyle{definition}
 \theoremstyle{remark}
 \numberwithin{equation}{section}
\tikzstyle{level 1}=[level distance=1.5cm, sibling distance=1.5cm]
\tikzstyle{level 2}=[level distance=1.5cm, sibling distance=2cm]
\tikzstyle{bag} = [text width=4em, text centered]
\tikzstyle{end} = [circle, minimum width=3pt,fill, inner sep=0pt]
\begin{document}

\title{The Markoff equation over polynomial rings}
\author{Ricardo Concei\c c\~ao}
\address{Department of Mathematics\\
                Gettysburg College\\
                Gettysburg, Pa\\
                17325, USA}
\email{rconceic@gettysburg.edu}
\author{Rachael Kelly}
\address{Department of Mathematics\\
                Gettysburg College\\
                Gettysburg, Pa\\
                17325, USA}
\email{kellra01@gettysburg.edu}
\author{Samuel VanFossen}
\address{Department of Mathematics\\
                Gettysburg College\\
                Gettysburg, Pa\\
                17325, USA}
\email{vanfsa01@gettysburg.edu}


\keywords{Markoff equation, polynomial ring}

\date{}
\begin{abstract}
When $A=3$, the positive integral solutions of the so-called Markoff equation 
$$M_A:x^2 + y^2 + z^2 = Axyz$$
 can be generated from the single solution $(1,1,1)$ by the action of certain automorphisms of the hypersurface. Since Markoff's proof of this fact, several authors have showed that the structure of $M_A(R)$, when $R$ is $\Z[i]$ or certain orders in number fields, behave in a similar fashion. Moreover, for $R=\Z$ and $R=\Z[i]$, Zagier and Silverman, respectively, have found asymptotic formulae for the number of integral points of bounded height. In this paper, we investigate these problems when $R$ is a polynomial ring over a field $K$ of odd characteristic. We characterize the set $M_A(K[t])$ in a similar fashion as Markoff and previous authors. We also give an asymptotic formula that is similar to Zagier's and Silverman's formula.
\end{abstract}

\maketitle
\section{Introduction}\label{Introduction}


The Markoff equation 
\begin{equation}\label{eq:Markoffpol}
M_A:x^2 + y^2 + z^2 = Axyz, 
\end{equation}
  with $A\neq 0$, has been a subject of close scrutiny in mathematics for over a century. Interest in this equation grew out of Markoff's work relating the set of integral solutions  $M_3(\Z)$ to questions in diophantine approximation \cite{aigner,cassels}.  Markoff also proved that all  non-zero integral solutions of \eqref{eq:Markoffpol}, if they exist, can be generated from a single fundamental solution by permutation of the coordinates,  a double change of sign
  $$
  (x,y,z)\longmapsto (-x,-y,z),
  $$
   and the branching automorphism 
\begin{equation} \label{eq:pred}
  \rho:(x,y,z)\longmapsto (x,y,Axy-z).
\end{equation}
More precisely, if we let $G_A$ be the group generated by the above automorphisms, then Markoff's result says that 
$$
M_A(\Z)\backslash \{0\}= \left\{
\begin{array}{ll}
 G_A\{(1,1,\pm1)\}, & \text{if $A=\pm 3$};\\
 G_A\{(3,3,\pm3)\}, & \text{if $A=\pm 1$};\\
 \emptyset, & \text{otherwise.}
\end{array}\right.
$$
where $G_A(P)$ denotes  the  $G_A$-orbit of the point $P$.

%
%
%
 
Given this non-trivial description of the  solutions of the Markoff equation over $\Z$, a natural question is to find other rings for which  the solutions of \eqref{eq:Markoffpol} can be characterized in an analogous  way. This has been done for orders in quadratic imaginary fields \cite{silverman} and, more generally, orders in number fields \cite{baragar}. Additionally, several authors have studied the solutions of \eqref{eq:Markoffpol} and its generalizations over finite rings \cite{carlitz, baoulina}. 

In this paper, we give a similar characterization of the solutions of \eqref{eq:Markoffpol} over the polynomial ring $K[t]$, with  $K$ a field of characteristic $\neq 2$.  Because we are interested on ``integral'' solutions of the Markoff equation over a polynomial ring, it is natural that we focus on the set  $M_A(K[t])$  of \emph{non-constant polynomial solutions}  of \eqref{eq:Markoffpol}; that is, we investigate the triples  $(x,y,z)$ such that $x,y,z\in K[t]$ are not all constants. To state one of our main result, we  define 
$$
G_A(S)=\{g(s): g\in G_A, s \in S\},
$$ 
for  $S$  a subset of $M_A(K[t])$ and $G_A$ the group defined above.

\begin{theorem}\label{thm:pol_tree} Let $A$ be a non-zero element of $K[t]$.
\begin{enumerate}
\item The Markoff polynomial equation \eqref{eq:Markoffpol} has a non-constant solution over $K[t]$ if and only if $i=\sqrt{-1}\in K$. 
 \item If $A$ is non-constant then 
 $$
 M_A(K[t])=G_A\{(f,if,0): f\in K[t]\backslash K\}.
 $$

  \item If $A$ is a non-zero constant then $ M_A(K[t])$ is equal to
   $$
G_A\left\{\left(f,af \pm \frac{2ai}{A},\frac{2a}{A}\right): f\in K[t]\backslash K, a=\pm 1\right\}\cup G_A\{(f,\pm if,0): f\in K[t]\backslash K\}
 $$
\end{enumerate}
\end{theorem}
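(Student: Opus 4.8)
The plan is to run a Vieta (descent) argument graded by polynomial degree, exploiting that \eqref{eq:Markoffpol} is a monic quadratic in each coordinate. If $(x,y,z)$ is a solution, then $x$ and $x'=Ayz-x$ are the two roots of $T^2-(Ayz)T+(y^2+z^2)$, so that
\[
x+x'=Ayz,\qquad x\,x'=y^2+z^2.
\]
The second identity is the engine of the descent: whenever $y^2+z^2\neq0$ it gives $\deg x+\deg x'=\deg(y^2+z^2)$.

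First I would establish a reduction step. Ordering the coordinates so that $\deg x\ge\deg y\ge\deg z$, if $\deg x>\deg y$ and $y^2+z^2\neq0$ then
\[
\deg x'=\deg(y^2+z^2)-\deg x\le 2\deg y-\deg x<\deg x,
\]
so the first-coordinate analogue of $\rho$ strictly lowers the maximal degree. Thus every non-constant solution can be carried by $G_A$ to a \emph{reduced} one, for which no such strict drop exists; a reduced solution must have either a vanishing coordinate, or $\deg x=\deg y$, or $y^2+z^2=0$ (equivalently two coordinates that are $\pm i$-multiples of one another, which after one more jump produces a vanishing coordinate).

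Next I would classify the reduced solutions, and this is where the two parts separate. Writing $a=\deg A$ and $d_1\ge d_2\ge d_3$ for the degrees, comparing the two sides of \eqref{eq:Markoffpol} (whose leading coefficient on the right is a nonzero product when $xyz\neq0$) gives $a+d_1+d_2+d_3\le 2d_1$, i.e.\ $a+d_2+d_3\le d_1$; the reduced condition $d_1=d_2$ then forces $a=d_3=0$. Hence for non-constant $A$ no reduced solution can have all coordinates non-zero, so it must have a vanishing coordinate; $z=0$ yields $x^2+y^2=0$, that is $(f,\pm if,0)$, giving Part (2). For constant $A$ the forced value $d_3=0$ says the small coordinate is a constant $z_0$; viewing \eqref{eq:Markoffpol} as a quadratic in $y$ over $K[t]$, its discriminant $(A^2z_0^2-4)x^2-4z_0^2$ must be a square in $K[t]$, and for non-constant $x$ this is impossible unless its leading coefficient vanishes. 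That forces $A^2z_0^2=4$, i.e.\ $z_0=2a/A$ with $a=\pm1$; the residual constant $-4z_0^2=-16/A^2$ must then itself be a square, producing $i\in K$ and, upon solving the quadratic, the explicit family $(f,\,af\pm 2ai/A,\,2a/A)$ of Part (3), alongside the $z=0$ family $(f,\pm if,0)$.

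Both reductions expose a relation of the form $u^2+v^2=0$ or a coefficient $2ai/A$, so a reduced solution lies in $K[t]$ only when $i\in K$; conversely $(f,if,0)$ is a genuine non-constant solution as soon as $i\in K$ and $f\notin K$, which settles Part (1). The step I expect to be the main obstacle is the precise analysis of the reduced case: one must keep the descent well-founded in the presence of zero coordinates (applying $x\,x'=y^2+z^2$ only when $y^2+z^2\neq0$, and routing the vanishing case to a coordinate-zero solution), and one must be careful at the interface with the constant solutions---deciding when a reduction that lands on a solution with a constant coordinate should be regarded as reaching a seed versus leaving $M_A(K[t])$---since this is exactly what determines whether the listed seeds exhaust the orbits. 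I would finish by checking that the seeds in (2) and (3) are themselves reduced and lie in distinct $G_A$-orbits, so that the displayed unions are disjoint and complete.
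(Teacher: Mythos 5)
Your overall strategy is the same as the paper's: a descent (the paper applies $\rho$ to the largest coordinate and calls the terminal objects ``fundamental triples''; you phrase the same step via the Vieta identity $xx'=y^2+z^2$), followed by a degree/leading-coefficient analysis of the terminal triples. Your classification of the constant-$A$ seeds via the discriminant of the quadratic in $y$ is a legitimate variant of the paper's long-division argument ($y=az+b$ plus comparison of leading coefficients), and your degree inequality $\deg A+d_2+d_3\le d_1$ matches Lemma \ref{lem:xzero}. So far, so good.

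However, the step you explicitly defer --- ``the interface with the constant solutions'' --- is a genuine gap, and it is not a routine verification: it is exactly where the argument (and, as stated, the theorem for non-constant $A$) breaks. Take $A$ non-constant with $i\in K$ and consider $P=(1,i,iA)$. This satisfies \eqref{eq:Markoffpol}, lies in $M_A(K[t])$ since $iA$ is non-constant, and is reduced in your sense only through the branch $y^2+z^2=0$; the ``one more jump'' you invoke sends it to the \emph{constant} triple $(0,i,1)$, i.e.\ to $(f,if,0)$ with $f$ a unit, which is not in the seed set of part (2). This cannot be repaired by choosing a cleverer path through the orbit: every generator of $G_A$ (permutations, double sign changes, and $(x,y,z)\mapsto(x,y,Axy-z)$) preserves the ideal $(x,y,z)$, so $\gcd(x,y,z)$ is a $G_A$-invariant; every element of $G_A\{(f,if,0):f\in K[t]\backslash K\}$ has non-constant gcd, while $\gcd(1,i,iA)=1$. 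Hence the displayed equality in part (2) fails for such triples, and any correct proof must either enlarge the seed set to account for solutions with unit gcd or restrict the statement. (For constant $A$ this phenomenon does not occur, since $\deg z=\deg x+\deg y$ forces the predecessor of a non-fundamental triple to have positive height, so your part (3) analysis is sound.) Note that the paper's own proof of Lemma \ref{lem:orbit} has the same lacuna: it asserts that $\pi(\rho(P))$ is again a Markoff triple, which presupposes positive height and fails precisely on triples such as $(1,i,iA)$. Your instinct that this is ``the main obstacle'' is correct, but leaving it unresolved means the proof is incomplete at its most delicate point. Separately, your claim that $(A^2z_0^2-4)x^2-4z_0^2$ cannot be a square for non-constant $x$ unless $A^2z_0^2=4$ is true but needs the one-line factorization argument over $K(\sqrt{A^2z_0^2-4})$ to be a proof.
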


Theorem \ref{thm:pol_tree} is proved in the next section.

A natural question that arises from the infinitude of $M_A(R)$, for a general ring $R$, is the estimation of the number of solutions  of bounded height. In this direction, we have the following two results
\begin{theorem}[Zagier \cite{zagier}] For some constant $C$,
 $$
 \#\{(x,y,z)\in M_3(\Z):|x|,|y|,|z|\leq H\}=C(\log H)^2+O(\log H(\log\log H)^2)
 $$
 as $H\longrightarrow\infty$.
\end{theorem}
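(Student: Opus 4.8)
The plan is to reduce the stated count to that of \emph{positive ordered} Markoff triples $a\le b\le c$ and then to exploit the tree structure of $M_3(\Z)$ together with the near-additivity of a logarithmic height. First I would observe that the admissible sign patterns (all positive, or exactly two negative, forced by $xyz\ge0$) and the coordinate permutations change the count only by a bounded factor, and that the only solution with a zero coordinate is $(0,0,0)$; moreover, for an ordered positive triple the condition $|x|,|y|,|z|\le H$ is simply $c\le H$. Thus it suffices to determine the asymptotics of $N(T):=\#\{\text{ordered positive Markoff triples with }\log(3c)\le T\}$, where $T=\log(3H)=\log H+O(1)$, and then transcribe the result back in terms of $H$.

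The core observation is the near-additivity of the height $g(m):=\log(3m)$ along the Markoff tree. The branching move $(a,b,c)\mapsto(b,c,c')$ with $c'=3bc-a$ carries the largest coordinate to $c'$, and since $3c'=9bc-3a$ one has $g(c')=g(b)+g(c)+\delta$, where $\delta=\log\bigl(1-\tfrac{a}{3bc}\bigr)$ is negative and exponentially small in the depth of the node. Consequently, along the binary tree rooted at the small triples $(1,1,1),(1,1,2),(1,2,5)$, the value $g$ of the maximal coordinate is, up to a rapidly convergent error, a nonnegative integer linear combination of the seed heights $g(1)=\log3$ and $g(2)=\log6$, with coefficients of Fibonacci type prescribed by the path to the node.

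I would then convert this into a lattice count. Parametrizing the tree nodes by finite words in the two up-moves, one identifies them with the coprime pairs $(p,q)$ produced by the mediant/Stern--Brocot structure underlying the tree, and shows that $N(T)$ equals, up to a controlled error, the number of such pairs whose additive weight is $\le T$. Because this weight is comparable to a linear form in $p$ and $q$, the count becomes a two-dimensional lattice-point count over a triangular region of area $\asymp T^2$, giving $N(T)\sim CT^2$ and hence the main term $C(\log H)^2$. Equivalently, one may phrase this through the bijection between Markoff triples and simple closed geodesics on the modular torus, where $\cosh(\ell/2)=\tfrac{3c}{2}$ forces $\ell\approx 2g(c)$, so that the quadratic growth of the number of such geodesics of bounded length reproduces the $(\log H)^2$ behavior.

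The main obstacle will be the error term. The leading asymptotic $C(\log H)^2$ is \emph{soft}: it follows from the additive and lattice structure above. The sharp bound $O(\log H(\log\log H)^2)$ is harder, since it requires tracking how the exponentially small corrections $\delta$ accumulate over the $\asymp\log H$ levels of the tree and, more delicately, how the boundary of the counting region is approximated when the discrete additive weight is replaced by its exact continued-fraction value. I expect the factor $(\log\log H)^2$ to emerge from this boundary/remainder analysis, the double logarithm reflecting that the relevant secondary parameter ranges over $\asymp\log H$ values whose own fluctuations are of logarithmic size. Making this remainder estimate precise—rather than isolating the main term—is the crux, and is where I would concentrate the work, following the functional-equation analysis of $N(T)$ due to Zagier.
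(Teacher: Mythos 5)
First, a point of orientation: the paper does not prove this statement. It is Zagier's theorem, quoted verbatim with a citation to \cite{zagier} purely as motivation for the function-field analogue; the closest thing the paper proves is Corollary \ref{cor:counting}, where the asymptotic $\frac14 H^2$ over $K[t]$ falls out of \emph{exact} additivity of degrees along the Euclid trees ($\deg(Axy-z)=\deg A+\deg x+\deg y$ with no error term), so the entire analytic difficulty you grapple with simply does not arise there. There is therefore no in-paper proof to compare yours against; your proposal has to be judged as an attempted reconstruction of Zagier's argument.

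As such a reconstruction, your outline is faithful in broad strokes — the reduction to positive ordered triples, the near-additivity $g(c')=g(b)+g(c)+\log\bigl(1-\tfrac{a}{3bc}\bigr)$, and the passage to a lattice-point count over a triangular region are indeed the skeleton of Zagier's proof. But there is a genuine gap, and you name it yourself: everything quantitative is deferred. Concretely, (i) the claim that $\delta$ is ``exponentially small in the depth'' presupposes the doubly exponential growth of Markoff numbers along every branch of the tree, which must itself be established before the corrections can be summed; (ii) the existence of the constant $C$ is not soft — it requires showing that the accumulated corrections converge to a well-defined function on the boundary of the tree (in Zagier's treatment, the analytic continuation and functional-equation analysis of $N(T)$), and without this you get only $N(T)\asymp T^2$, i.e.\ the upper-and-lower-bound statement of Silverman's type, not an asymptotic with a leading constant; and (iii) the error term $O(\log H(\log\log H)^2)$, which is the actual content of the theorem beyond the order of magnitude, is left entirely to ``following Zagier.'' A proof that ends by citing the result it is meant to prove is not a proof; to complete it you would need to carry out the boundary/remainder estimate you describe, which is where essentially all of the work in \cite{zagier} lives.
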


\begin{theorem}[Silverman \cite{silverman}]
Let $a\in \Z[i]$ with $|a|\geq 4$. Then
  $$
 \#\{(x,y,z)\in M_a(\Z[i]):|x|,|y|,|z|\leq H,\gcd(x,y,z)=1\}\gg\ll(\log H)^2
 $$
  as $H\longrightarrow\infty$.
\end{theorem}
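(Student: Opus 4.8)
The plan is to mirror the classical Markoff counting argument of Zagier \cite{zagier} together with the structure theory underlying Theorem \ref{thm:pol_tree}, now over $\Z[i]$. First I would invoke the $\Z[i]$-analog of Theorem \ref{thm:pol_tree}, established by Silverman \cite{silverman}: every \emph{primitive} solution of $M_a$ over $\Z[i]$ lies in one of finitely many $G_a$-orbits, and each orbit is organized as a rooted binary tree under the Vieta involution $\rho:(x,y,z)\mapsto(x,y,axy-z)$ together with coordinate permutations. Restricting to triples with $\gcd(x,y,z)=1$ is essential here, because $\rho$ and the permutations preserve the gcd, so the count decomposes into a finite sum over these trees and the (finite) number of roots is absorbed into the implied constants of $\asymp$. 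It then suffices to count, within a single tree, the nodes $(x,y,z)$ with $\max(|x|,|y|,|z|)\le H$.

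Second, I would analyze how the height grows along the tree. Writing a node as $(x,y,z)$ with $|x|\le|y|\le|z|$, its parent replaces $z$ by $axy-z$, strictly smaller in modulus, while its two children enlarge one of the two smaller coordinates. Since the enlarged coordinate is $\approx a\cdot(\text{product of the other two})$, passing to logarithms linearizes the dynamics: setting $t=\log|z|$ and $s=\log|y|$ (logs of the two largest moduli), each branching move acts on $(s,t)$, up to a bounded error, by one of the two affine maps $(s,t)\mapsto(t,\,s+t+\log|a|)$ or $(s,t)\mapsto(t,\,2t-s)$. The key structural point is that $t$ strictly increases along every downward path, so the condition $\max\le H$ becomes $t\le L:=\log H$.

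Third, the heart of the matter is the estimate that the number of nodes with $t\le L$ is $\asymp L^2$. For the upper bound I would exploit the behavior of the gap $g=t-s$: it is preserved by the second affine map, under which $t$ advances by $g$ at each step, so a run of such moves is an arithmetic progression in $t$ contributing $O(L/g)$ nodes, while the first map resets the gap and can occur only $O(L)$ times along any path. Combining these two linear scales gives the quadratic bound $O(L^2)$. For the matching lower bound I would exhibit a two-parameter family of admissible move-words — a bounded number of ``multiplicative'' moves interleaved with runs of the gap-preserving move, indexed by their lengths — producing $\gg L^2$ distinct nodes with $t\le L$. Throughout, the $O(1)$ errors in the affine approximation and the passage from real logarithms to genuine complex moduli (including associates and units of $\Z[i]$) perturb $t$ by only a bounded amount per step and therefore do not affect the order of growth.

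The step I expect to be the main obstacle is precisely this quadratic count. Proving mere polynomial growth is comparatively easy; forcing the upper and lower bounds to match at the order $L^2$ requires showing that the two-dimensional log-dynamics genuinely sweeps out a region of area $\asymp L^2$ while remaining essentially injective on tree-nodes, and ruling out coincidences among distinct tree-words that could either inflate or collapse the count. By contrast, the structure theorem and the logarithmic linearization are routine once the $\Z[i]$ Markoff tree is in hand, so essentially all of the analytic work lies in this counting estimate.
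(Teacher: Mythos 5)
This statement is not proved in the paper at all: it is quoted verbatim as background, attributed to Silverman \cite{silverman}, and the authors use it only as motivation for their own polynomial-ring analogue (Theorem \ref{thm:counting}). So there is no internal proof to compare against; your proposal has to be judged on its own.

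As it stands there are genuine gaps. First, the decisive step --- that the number of tree nodes with $\max(|x|,|y|,|z|)\le H$ is $\asymp(\log H)^2$ --- is exactly the content of the theorem, and you explicitly leave it as ``the main obstacle'' rather than proving it; a sketch that defers the quadratic count is a sketch of the statement, not a proof. Second, the linearization you propose is incorrect in a way that matters for that count. Writing the sorted log-moduli as $(r,s,t)$, the two branching moves send the triple (up to $O(1)$) to $(s,t,s+t+\log|a|)$ and to $(r,t,r+t+\log|a|)$; projecting to the top two coordinates, the second move becomes $(s,t)\mapsto(t,r+t+\log|a|)$, which depends on the discarded smallest coordinate $r$ and is \emph{not} $(s,t)\mapsto(t,2t-s)$. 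Neither move preserves the gap $t-s$, so the ``arithmetic progression with common difference $g=t-s$'' mechanism you rely on for the upper bound does not exist as described; the runs that do produce arithmetic progressions have common difference equal to the \emph{smallest} log-modulus (compare the paper's own Euclid-tree operations $\Gamma_{\beta,1},\Gamma_{\beta,2}$, which deliberately track all three coordinates for precisely this reason). You would need to redo the upper and lower bounds with the correct three-variable dynamics. Finally, the descent step over $\Z[i]$ --- that $|axy-z|<|z|$ for a non-fundamental sorted triple, and that the fundamental triples form a finite nonempty set --- is where the hypothesis $|a|\ge 4$ enters and requires a genuine argument with the identity $z(axy-z)=x^2+y^2$ and the triangle inequality (the naive bound only gives $|axy-z|\le 2|y|$); citing Silverman for this is legitimate as a division of labor, but it should be flagged as a nontrivial input rather than folded into ``routine once the tree is in hand.''
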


Over $K[t]$, we have a few different choices for a counting function for the number of solutions of bounded height $H$.  For general $K$,  we count the number of triples $(\deg x,\deg y,\deg z)$ where $(x,y,z)\in M_A(K[t])$ and $\max\{\deg x,\deg y,\deg z\}\leq H.$ When $K$ is finite,  it is more natural to count the number of triples $(x,y,z)\in M_A(K[t])$ with $\max\{\deg x,\deg y,\deg z\}= H$. In Section \ref{sec:countingsol}, we develop a machinery that can be used to solve many of the counting problems associated to these functions. We highlight the following result because of its similarity with Zagier's and Silverman's Theorems.

\begin{theorem}[Corollary \ref{cor:counting}]\label{thm:counting} For $P=(x,y,z)\in M_A(K[t])$, we let $S(P)=(\deg x,\deg y,\deg z)$ and $h(P)=\max(\deg x,\deg y,\deg z)$.
  If $A$ is a non-zero constant then
 $$
\#\{S(P):P\in M_A(k[t]),h(P)\leq H\} \sim \frac{1}{4}H^2,
 $$
 as $H\longrightarrow \infty$.
\end{theorem}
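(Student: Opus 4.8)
The plan is to translate everything into a statement about how the triple of degrees $S(P)$ behaves under the generators of $G_A$, and then reduce the problem to a lattice-point count in a triangle. By part (3) of Theorem \ref{thm:pol_tree}, every $P\in M_A(K[t])$ lies in the $G_A$-orbit of one of the explicit fundamental solutions. Among the generators of $G_A$, the sign change fixes $S(P)$ and the permutations merely reorder its entries, so I only need to track what the branching automorphism $\rho\colon(x,y,z)\mapsto(x,y,Axy-z)$ does to degrees. Since $A\in K^\times$ is constant, $\deg(Axy)=\deg x+\deg y$, so whenever $\deg x+\deg y>\deg z$ there is no cancellation of leading terms and $\rho$ sends the degree triple $(\deg x,\deg y,\deg z)$ to $(\deg x,\deg y,\deg x+\deg y)$.

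First I would record the degree triples of the fundamental solutions: writing $d=\deg f\ge 1$, the solution $(f,af\pm 2ai/A,2a/A)$ has sorted degree triple $(0,d,d)$, while $(f,\pm if,0)$ contributes only degenerate triples with a zero coordinate. I would then prove by induction along the orbit tree the key invariant: every non-constant solution has sorted degree triple of the form $(\alpha,\beta,\alpha+\beta)$, that is, the largest degree equals the sum of the other two. The base case is $(0,d,d)$, and the inductive step is exactly the computation above, the point being that the move increasing the maximal degree always replaces the smallest coordinate, so $\deg x+\deg y>\deg z$ holds and no leading cancellation occurs. Reading off the two smaller degrees, the branching rule becomes the mediant map $(\alpha,\beta)\mapsto\{(\alpha,\alpha+\beta),(\beta,\alpha+\beta)\}$; after dividing through by $d$ this is the Stern--Brocot (Euclidean) tree, which visits every coprime pair. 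Taking the union over all $d\ge1$ then shows that the set of sorted degree triples that actually occur is exactly
$$
\{(\alpha,\beta,\alpha+\beta):1\le\alpha\le\beta\}\cup\{(0,\beta,\beta):\beta\ge 1\},
$$
together with the negligible degenerate family coming from $(f,\pm if,0)$.

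With this characterization the count is a lattice-point estimate. Because $G_A$ contains all coordinate permutations, the distinct values of $S(P)$ are parametrized, after accounting for this symmetry, by pairs $(\alpha,\beta)$ with $\alpha\le\beta$, and since $h(P)=\alpha+\beta$ on the non-degenerate triples the constraint $h(P)\le H$ becomes $\alpha+\beta\le H$; the families with a zero coordinate contribute only $O(H)$ and are absorbed into the error. The resulting count is
$$
\sum_{\alpha=1}^{\lfloor H/2\rfloor}\bigl(H-2\alpha+1\bigr)=\lfloor H/2\rfloor\bigl(H-\lfloor H/2\rfloor\bigr)\sim\frac14 H^2,
$$
which is just the statement that the triangle $\{\alpha\ge 0,\ \beta\ge\alpha,\ \alpha+\beta\le H\}$ has area $H^2/4$.

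I expect the main obstacle to be the careful bookkeeping of leading coefficients justifying $\deg(Axy-z)=\deg x+\deg y$ uniformly along the tree, and in particular the degeneracies near the root where two degrees coincide (so that one branch loops back rather than producing a genuinely new triple) and the convention for $\deg 0$ in the $(f,\pm if,0)$ family. The second delicate point is proving that the mediant tree, together with the scaling by $d=\deg f$, genuinely surjects onto all pairs $(\alpha,\beta)$ with $\alpha\le\beta$, so that the triangle is filled with no systematic gaps. Once the invariant and this surjectivity are in place, the asymptotic follows immediately from the triangle area.
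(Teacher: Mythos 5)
Your proposal is correct and follows essentially the same route as the paper: identify the sorted degree triples of non-fundamental solutions as exactly the set $\{(e,f,e+f):1\le e\le f\}$ generated by the additive (Euclid-tree) branching, note that the fundamental families contribute only $O(H)$, and count lattice points, which is the same computation as the paper's $\sum_{n\le H}(\lfloor n/2\rfloor+2)$ organized as a triangle count instead of height-by-height. The only cosmetic difference is that you establish surjectivity onto all pairs $(e,f)$ via the Stern--Brocot/coprime-pair enumeration scaled by $d=\gcd(e,f)$, whereas the paper's Lemma \ref{lem:cofloor} runs the subtractive Euclidean descent $\gamma$ back to a root $(\alpha,\alpha,2\alpha)$ --- the same idea in the other direction.
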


\section{Proof of Theorem \ref{thm:pol_tree}}\label{sec:Markoff_tree}


Recall that $K$ is a field of characteristic $\neq 2$ and $K[t]$ is the ring of polynomials in the indeterminate $t$ over $K$. Let $P=(x,y,z)$ be a solution to \eqref{eq:Markoffpol}. The \emph{height of $P$} is the integer $h(P)=\max\{\deg x,\deg y,\deg z\}$. We say that $P$ is a \emph{Markoff triple} if $h(P)>0$ and $\deg x\leq \deg y\leq \deg z$.  Notice that all solutions of \eqref{eq:Markoffpol} with positive height become a Markoff triple after permutation of coordinates. If $P$ is a Markoff triple then, after a  permutation of coordinates, the triple $\rho(P)=(x,y,Axy-z)$ given by \eqref{eq:pred}  is a Markoff triple  called \emph{the predecessor of $P$}. 
If $\deg y=\deg z$ then $P$ is called  a \emph{fundamental (Markoff) triple}.

The next result shows that 
$$
M_A(K[t])=G_A(\mathcal{F}),
$$
where $\mathcal{F}$ is the set of all fundamental triples.
\begin{lemma}\label{lem:orbit}
Let $P$ be a Markoff triple. Then there exists $g\in G_A$ and a fundamental Markoff triple $R$ such that $P=g(R)$.
\end{lemma}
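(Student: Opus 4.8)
The plan is to prove the lemma by descent on the height $h(P)$: I will show that whenever $P$ is a Markoff triple that is not fundamental, its predecessor has strictly smaller height, so that iterating the predecessor operation terminates at a fundamental triple, and running this reduction backwards inside the group $G_A$ exhibits $P$ as $g(R)$.

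First I would isolate the key degree estimate. Writing $P=(x,y,z)$ with $\deg x\le\deg y\le\deg z$, equation \eqref{eq:Markoffpol} says that $z$ is a root of the quadratic $T^2-(Axy)T+(x^2+y^2)$, whose second root is exactly the third coordinate $z'=Axy-z$ of $\rho(P)$ in \eqref{eq:pred}. Vieta's formula gives $zz'=x^2+y^2$, so
$$
\deg z+\deg z'=\deg(x^2+y^2)\le 2\deg y .
$$
If $P$ is not fundamental then $\deg y<\deg z$, and this identity forces $\deg z'\le 2\deg y-\deg z<\deg z$. Hence, after reordering the coordinates of $\rho(P)$, the predecessor is a triple of height $\max(\deg y,\deg z')<\deg z=h(P)$, so passing to the predecessor strictly lowers the height.

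With this in hand I would run the descent. Starting from $P$ and repeatedly replacing a non-fundamental triple by its predecessor yields a strictly decreasing sequence of nonnegative integer heights, which must terminate; termination can only occur at a triple with $\deg y=\deg z$, namely a fundamental triple $R$. Each descent step is a composite of a coordinate permutation, the involution $\rho$, and a further permutation, all lying in $G_A$. Since $G_A$ is a group, composing the inverses of these steps writes $P=g(R)$ with $g\in G_A$, which is the claim.

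The step I expect to be the main obstacle is the degenerate boundary, where the predecessor, though of smaller height, fails to be a Markoff triple of positive height. By $zz'=x^2+y^2$ with $z\neq 0$, the predecessor acquires a zero coordinate exactly when $x^2+y^2=0$, in which case it collapses to $(x,\pm ix,0)$; this both forces $i\in K$ and produces the triples $(f,\pm if,0)$ of Theorem \ref{thm:pol_tree}. When $x$ is nonconstant this collapsed triple is a legitimate fundamental triple, but when $x$ is constant --- which can happen only if $A$ is nonconstant --- its height is $0$. I would handle this by regarding every triple with $\deg y=\deg z$, including these collapsed ones carrying a vanishing coordinate, as terminal, and by checking the tight low-degree configurations directly, so that the descent is guaranteed to land on such a terminal triple rather than to escape the relevant family.
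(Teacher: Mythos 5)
Your proposal follows the same strategy as the paper's own proof: show that the predecessor of a non-fundamental Markoff triple has strictly smaller height, descend until the process terminates, and then invert the descent inside $G_A$. The only technical difference is how the height drop is obtained. You use Vieta's relation $zz'=x^2+y^2$ to get $\deg z'\le 2\deg y-\deg z<\deg y$, whereas the paper computes $\deg z=\deg A+\deg x+\deg y$ and matches leading coefficients to see that the cancellation in $Axy-z$ lowers the degree. Both are valid; yours is slightly cleaner and handles the case $x^2+y^2=0$ (where $z'=0$) without a separate computation.

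The ``degenerate boundary'' you flag at the end is not a phantom worry: it is a genuine gap, and one that the paper's proof silently steps over. When $A$ is non-constant and $x\in K^*$, the triple $(x,ix,iAx^2)$ is a non-fundamental Markoff triple whose predecessor is $(x,ix,0)$, which has height $0$ and therefore is not a Markoff triple at all; at this point the paper's sentence ``let $\pi$ be a permutation of coordinates such that $\pi(P_1)$ is a Markoff triple'' has no meaning, and the descent terminates outside the set of fundamental triples. Moreover this cannot be repaired without changing the statement: every generator of $G_A$ preserves $\gcd(x,y,z)$ up to units, so the triple $(1,i,iA)$, whose coordinates are coprime, cannot lie in the $G_A$-orbit of any fundamental triple $(0,\pm if,f)$ with $f$ non-constant. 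Hence Lemma \ref{lem:orbit} (and with it part (2) of Theorem \ref{thm:pol_tree}) fails for such triples unless the constant terminal triples $(x,\pm ix,0)$, $x\in K^*$, are admitted as additional roots. Your instinct to declare every triple with $\deg y=\deg z$ terminal, including the collapsed ones, is exactly the right repair, but as written neither your sketch nor the paper's argument closes this case, and your final sentence leaves the verification of the ``tight low-degree configurations'' unspecified. You should make that analysis explicit and note that it forces a modification of the statement being proved.
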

\begin{proof}
We first show that if $P$ is a non-fundamental Markoff triple  then there exists a Markoff triple $\tilde{P}$ such that $h(P)>h(\tilde{P})$.

If $P=(x,y,z)$ is not a fundamental triple then $x\neq 0$, from \eqref{eq:Markoffpol}, and $0\leq \deg x\leq \deg y<\deg z$. By equating degrees in \eqref{eq:Markoffpol}, we have
 $$
 \deg z=\deg A+\deg x+\deg y.
 $$
Also, if $a, a_x,a_y$ and $a_z$ are the respective leading coefficients of $A,x,y$ and $z$, then \eqref{eq:Markoffpol} implies 
 $$
 a_z=aa_xa_y.
 $$
 This shows that  ${P}_1=\rho(P)$ satisfies $h(P)>h({P}_1)$. Let $\pi$ be a permutation of coordinates such that $\tilde{P}=\pi({P}_1)$ is a Markoff triple. Then 
 $h(P)>h(\tilde{P})=h({P}_1)$, as desired.

To finish the proof of the lemma, given a Markoff triple $P$,  we construct a sequence of Markoff triples $P_i$, $i\geq 0$, as follows. We let $P_0=P$ and $P_{i+1}=\pi_i\rho (P_{i})$, where $\pi_i$ is the permutation that changes $\rho(P_i)$ into a Markoff triple. If all $P_i$'s were not fundamental triples then, by the above argument, we would arrive at infinite sequence of decreasing non-negative integers
$$
h(P)>h(P_1)>h(P_2)>h(P_3)>\cdots
$$
contradicting the well-ordering principle. Therefore, there is a non-negative integer $n$ such that $P_n$ is a fundamental Markoff triple. If we let $g=g_0g_1\cdots g_{n-1}$, where $g_i=\rho^{-1}\pi_i^{-1}$ then
$$
P=g(P_n)
$$
and the result follows.
\end{proof}

Therefore, to characterize $M_A(K[t])$ we need to characterize all fundamental triples, which we do in the following sequence of results.

\begin{lemma}\label{lem:xzero}
Let $(x,y,z)$ be a fundamental Markoff triple.
\begin{enumerate}
 \item   If $x\neq 0$ then $A$ and $x$ are constants.
 \item If $x=0$ then $i\in K$ and there exists $f\in K[t]\backslash K$ such that $y=\pm if$ and $z=f$.
\end{enumerate}

\end{lemma}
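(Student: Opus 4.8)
The plan is to prove both parts by comparing degrees, and in one case leading coefficients, in the Markoff equation \eqref{eq:Markoffpol}, exploiting the fact that $K[t]$ is an integral domain so that degrees add under multiplication. Throughout I would fix the notation $d=\deg y=\deg z$; since $(x,y,z)$ is a fundamental triple we have $\deg y=\deg z$ and $\deg x\le d$, and since it is a Markoff triple we have $h(P)=d>0$, hence $d\ge 1$. In particular $y$ and $z$ are non-zero polynomials of degree $d\ge 1$, and the left-hand side $x^2+y^2+z^2$ has degree at most $2d$.

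For part (1), assume $x\neq 0$. Then all four of $A,x,y,z$ are non-zero, so because $K[t]$ is a domain the right-hand side $Axyz$ has degree exactly $\deg A+\deg x+\deg y+\deg z=\deg A+\deg x+2d$. Equating this with the degree of the left-hand side, which is at most $2d$, forces $\deg A+\deg x\le 0$. As $\deg A\ge 0$ and $\deg x\ge 0$, both must vanish, so $A$ and $x$ are constants. This is the whole of part (1); the only point requiring care is to verify at the outset that $y,z\neq 0$ and $d\ge 1$, which is exactly what the fundamental-triple and positive-height hypotheses supply.

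For part (2), assume $x=0$, so the equation collapses to $y^2+z^2=0$ with $y,z$ non-zero of common degree $d\ge 1$. Reading off the coefficient of $t^{2d}$ gives $a_y^2+a_z^2=0$, where $a_y,a_z$ are the (non-zero) leading coefficients of $y,z$; hence $(a_y/a_z)^2=-1$, so $-1$ is a square in $K$ and $i=\sqrt{-1}\in K$. With $i$ now available in $K$ I would factor $y^2+z^2=(y+iz)(y-iz)=0$ and use that $K[t]$ is a domain to conclude $y=iz$ or $y=-iz$. Setting $f=z$, which is non-constant since $\deg z=d\ge 1$, yields $y=\pm if$ and $z=f$ with $f\in K[t]\setminus K$, as required.

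Neither part presents a serious obstacle once the degree bookkeeping is set up correctly; the crux is simply recognizing that the fundamental condition $\deg y=\deg z$ caps the left-hand side at degree $2d$ while the product on the right is forced to degree $\deg A+\deg x+2d$, so that the presence of any non-constant factor among $A,x$ is incompatible with the equation. The mild subtleties to check are that $d\ge 1$ (so that $y,z$ are genuinely non-constant and the leading-coefficient comparison is legitimate) and that the factorization in part (2) is valid only after $i\in K$ has been established, which is why the order of the two steps in part (2) matters.
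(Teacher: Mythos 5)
Your proof is correct and follows essentially the same route as the paper: part (1) by comparing the degree of $Axyz$ (which is exactly $\deg A+\deg x+2d$) against the bound $\deg(x^2+y^2+z^2)\le 2d$ coming from $\deg y=\deg z$, and part (2) by reading off leading coefficients of $y^2=-z^2$ to get $i\in K$ and then factoring. The paper phrases part (1) as the inequality $\deg z\ge \deg A+\deg x+\deg y$ specialized to $\deg y=\deg z$, but this is the same computation, and your explicit checks that $d\ge 1$ and that the factorization only happens after $i\in K$ is established are exactly the points the paper relies on implicitly.
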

\begin{proof}
Any Markoff triple $(x,y,z)$ with $x\neq 0$ satisfies $y,z\neq 0$ and, by equating degrees in  \eqref{eq:Markoffpol},
$$
\deg z\geq \deg A+\deg x+\deg y.
$$
 If additionally $(x,y,z)$ is a fundamental triple then $0\geq \deg A+\deg x$, which implies that $A$ and $x$ are constants. This proves part (1).

For part (2), notice that if $x=0$ then  \eqref{eq:Markoffpol} implies  $y^2=-z^2$ and $y,z\in K[t]\backslash K$. By comparing the leading coefficients of $y$ and $z$, we see that $i\in K$. To finish the proof, we notice that the solutions of $y^2=-z^2$ satisfies $y=\pm if$ and $z= f$, for some $f\in K[t]$.
\end{proof}

\begin{lemma}\label{lem:funsol1}
 Suppose $A$ is non-constant. Then \eqref{eq:Markoffpol} has a fundamental Markoff triple  if and only if $i\in K$. Moreover, if $i\in K$ then a fundamental Markoff triple is of the form
 $$
 (0,\pm if,f)
 $$
 for some $f \in K[t]\backslash K$.
\end{lemma}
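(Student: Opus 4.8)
The plan is to deduce this lemma almost entirely from Lemma \ref{lem:xzero}, which has already classified fundamental triples according to whether the smallest coordinate $x$ vanishes. The essential observation is that the hypothesis that $A$ is non-constant eliminates exactly one of the two cases in that classification.

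For the forward implication, I would start with an arbitrary fundamental Markoff triple $(x,y,z)$ and split on whether $x=0$. If $x\neq 0$, then part (1) of Lemma \ref{lem:xzero} forces both $A$ and $x$ to be constant, contradicting the hypothesis that $A$ is non-constant; hence this case cannot occur. Therefore $x=0$, and part (2) of Lemma \ref{lem:xzero} immediately yields $i\in K$ together with the description $y=\pm if$, $z=f$ for some $f\in K[t]\setminus K$. This single analysis simultaneously establishes the ``only if'' direction and the ``Moreover'' clause.

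For the reverse implication, assuming $i\in K$, I would simply exhibit a fundamental triple rather than argue abstractly: pick any $f\in K[t]\setminus K$ and set $P=(0,if,f)$. A one-line substitution confirms $0^2+(if)^2+f^2 = -f^2+f^2 = 0 = A\cdot 0\cdot (if)\cdot f$, so $P$ solves \eqref{eq:Markoffpol}. Since $f$ is non-constant we have $h(P)>0$, and the coordinates satisfy $\deg 0\leq \deg(if)=\deg f$ with $\deg y=\deg z$, so $P$ meets the definition of a fundamental Markoff triple.

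The argument presents no real obstacle, as the classification work was carried out in Lemma \ref{lem:xzero}; the only points requiring any care are the degenerate degree of the zero polynomial in the ordering condition $\deg x\leq\deg y\leq\deg z$, and the verification that the exhibited triple genuinely satisfies the defining equality $\deg y=\deg z$ of a fundamental triple, both of which are routine.
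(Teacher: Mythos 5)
Your proposal is correct and matches the paper's own proof: the converse direction is obtained by applying Lemma \ref{lem:xzero} (part (1) ruling out $x\neq 0$ because $A$ is non-constant, part (2) giving $i\in K$ and the form $(0,\pm if,f)$), and the forward direction by exhibiting $(0,if,f)$ as an explicit fundamental triple. Your write-up is simply a slightly more detailed version of the same argument.
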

\begin{proof}
If $i\in K$ then for any $f\in K[t]\backslash K$, the Markoff triple $(0,if,f)$ is a fundamental triple.

Conversely, suppose $(x,y,z)$ is a fundamental triple. Because of our assumption on $A$,  Lemma \ref{lem:xzero} implies that $x=0$. Therefore $i\in K$ and $(x,y,z)$ has the desired form.
 \end{proof}

Notice that the previous result and Lemma \ref{lem:orbit} proves part (2) and part (1) of Theorem \ref{thm:pol_tree}, for $A$ non-constant.
 
 \begin{lemma}\label{lem:funsol2}
 Suppose $A$ is constant. Then \eqref{eq:Markoffpol} has a fundamental Markoff triple  if and only if $i\in K$. Moreover, if $i\in K$ then a fundamental Markoff triple is of the form
 $$
 (0,\pm if,f)
 $$
 or 
 $$
 \left(\frac{2a}{A}, a f\pm \frac{2ai}{A},f\right)
 $$
 for some $f \in K[t]\backslash K$ and $a=\pm 1$.
\end{lemma}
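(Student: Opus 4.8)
The plan is to mirror the proof of Lemma \ref{lem:funsol1}: the reverse implication is immediate, and for the forward implication I would invoke Lemma \ref{lem:xzero} to split into the cases $x=0$ and $x\neq 0$, the second of which carries all the new content. For the ``if'' direction, if $i\in K$ then $(0,if,f)$ satisfies \eqref{eq:Markoffpol} and is fundamental for any $f\in K[t]\setminus K$, giving both existence and the family $(0,\pm if,f)$. For the converse, take a fundamental triple $(x,y,z)$ and apply Lemma \ref{lem:xzero}: if $x=0$, part (2) gives $i\in K$ and the form $(0,\pm if,f)$; the genuinely new case is $x\neq 0$, where part (1) forces $x=c$ to be a nonzero constant (since $A$ is already assumed constant).

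In this remaining case \eqref{eq:Markoffpol} reads $c^2+y^2+z^2=Acyz$. Because the height is positive and $\deg x=0\le\deg y=\deg z$, I first note $d:=\deg z=\deg y>0$, as otherwise all coordinates would be constant. The key device is to factor the degree-$2d$ part of the equation: over a splitting field $L$ of $T^2-AcT+1=(T-\alpha)(T-\beta)$ one has $\alpha\beta=1$ (so $\alpha,\beta\neq 0$) and $\alpha+\beta=Ac$, whence $y^2-Acyz+z^2=(y-\alpha z)(y-\beta z)$ and the equation becomes
$$(y-\alpha z)(y-\beta z)=-c^2,$$
a nonzero constant.

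The crux is a degree argument in the polynomial ring $L[t]$: a product of two elements of $L[t]$ can equal a nonzero constant only if both factors are nonzero constants. Hence $y-\alpha z$ and $y-\beta z$ both lie in $L$. If $\alpha\neq\beta$, subtracting gives $(\beta-\alpha)z\in L$, forcing $z$ to be constant and contradicting $d>0$. Therefore $\alpha=\beta$, i.e.\ $T^2-AcT+1$ has a double root; from $\alpha\beta=1$ this root is $a:=\alpha=\pm 1$, and $\alpha+\beta=Ac$ then yields $c=2a/A$. The equation now collapses to $(y-az)^2=-c^2=-4/A^2$, so $y-az$ is a constant whose square is $-4/A^2$; such a constant exists in $K$ exactly when $i\in K$, and then $y=az\pm 2ai/A$. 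Writing $f=z$ gives the stated form $(2a/A,\,af\pm 2ai/A,\,f)$ and, in particular, $i\in K$, completing the ``only if'' and the classification.

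Finally I would record the converse verification that each triple $(2a/A,\,af\pm 2ai/A,\,f)$ with $a=\pm1$, $f\in K[t]\setminus K$, and $i\in K$ actually solves \eqref{eq:Markoffpol} and is fundamental; this is a short substitution using $a^2=1$ and $i^2=-1$, together with $\deg y=\deg f=\deg z>\deg(2a/A)=0$. The main obstacle is the middle step: realizing that factoring $y^2-Acyz+z^2$ over the splitting field of $T^2-AcT+1$ converts the Markoff equation into the statement that a product of two polynomials is a nonzero constant, and then extracting from the forced double-root case \emph{both} the constraint $c=2a/A$ and the necessity of $i\in K$.
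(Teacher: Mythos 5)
Your proof is correct, but it takes a genuinely different route from the paper's. The paper stays entirely inside $K[t]$: it writes $y=az+b$ by long division with $a\in K^*$ and $\deg b<\deg z$, substitutes into \eqref{eq:Markoffpol}, and then peels off three successive leading-coefficient identities, $a^2+1=Aax$, $2ab=Abx$, and $b^2+x^2=0$, from which $a^2=1$, $x=2a/A$, $b=\pm ix$ and $i\in K$ all follow. You instead pass to the splitting field $L$ of $T^2-AcT+1$, factor $y^2-Acyz+z^2=(y-\alpha z)(y-\beta z)$, and use that a product in $L[t]$ equal to the nonzero constant $-c^2$ forces both factors to be constants; the case $\alpha\neq\beta$ then kills $z$, so the discriminant must vanish, which simultaneously produces $a=\pm1$, $c=2a/A$, and reduces the equation to $(y-az)^2=-4/A^2$, whence $i\in K$ and $y=az\pm 2ai/A$. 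All steps check: degree additivity in $L[t]$ is legitimate, $\alpha=\beta$ with $\alpha\beta=1$ gives $\alpha=\pm1$ (using $\car K\neq 2$), and $y-az\in K[t]$ with constant square is a constant of $K$. What your approach buys is a conceptual explanation of why the discriminant condition $Ac=\pm2$ is forced, collapsing the paper's three coefficient comparisons into one structural statement; what it costs is the (harmless) detour through a quadratic extension, whereas the paper's argument is more elementary and never leaves $K[t]$. Your closing remark about verifying that the exhibited triples really are fundamental solutions is not strictly needed for the lemma as stated (which only asserts the necessary form), though it is relevant to the set equality in Theorem \ref{thm:pol_tree}(3).
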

\begin{proof}
It is easy to check that if $i\in K$ then $(0,it,t)$ is an example of a fundamental Markoff triple.

For the converse, we may assume from Lemma \ref{lem:xzero} that $x\in K^*$, for any fundamental triple $(x,y,z)$. Consequently, $y$ and $z$ are non-zero, non-constant polynomials. Since $\deg y=\deg z$, we can use long division to find $a\in K^*$ and $b\in K[t]$ with $\deg b<\deg z$ such that $y=az+b$. From \eqref{eq:Markoffpol}, we arrive at
\begin{equation}\label{eq:yeqz}
(a^2+1)z^2+2abz+b^2+x^2=Aaxz^2+Abxz.
\end{equation}
Notice that $b=0$ would imply that $(a^2+1-Aax)z^2=-x^2$, which contradicts the fact that $z$ is non-constant. Thus $b\neq 0$.

Let $a_z\neq 0$ be the leading coefficient of $z$. Since $\deg b<\deg z$, we see that the leading coefficients of the left- and right-hand side of \eqref{eq:yeqz} are $(a^2+1)a_z^2$ and $Aaxa_z^2$, respectively. This implies that
\begin{equation}\label{eq:aval}
a^2+1=Aax 
\end{equation}
and, from \eqref{eq:yeqz},
$$
2abz+b^2+x^2=Abxz.
$$
Similarly, given that $\deg b<\deg z$, a comparison of leading coefficients in the previous equality lead us to
\begin{equation}\label{eq:bval}
 2ab=Abx
\end{equation}
and 
\begin{equation}
 b^2+x^2=0.
\end{equation}
The last equality implies that $i\in K$,  $b\in K^*$ and $b=\pm i x$. Also, \eqref{eq:aval} and \eqref{eq:bval} imply that $a^2=1$ and $x=2a/A$. In conclusion, if we let $f=z$ then a fundamental triple with $x\neq 0$ is of the form
$$
 \left(\frac{2a}{A}, af\pm \frac{2ai}{A},f\right).
$$
\end{proof}
 
This result finishes the proof of Theorem \ref{thm:pol_tree}. Indeed, when combined with Lemma \ref{lem:orbit} it proves part (3) and part (1) of Theorem \ref{thm:pol_tree}, for $A$ constant.

\section{Counting the number of solutions of bounded height}\label{sec:countingsol}

In this section, we develop a machinery that can be used to compute the size and asymptotics of the following sets
$$
\{T=(\deg x,\deg y,\deg z):(x,y,z)\in M_A(K[t]), \max T\leq H\},
$$
for general $K$, and
$$
\{(x,y,z)\in M_A(K[t]): \max (\deg x,\deg y,\deg z)= H\}
$$
when $K$ is finite. 

In our discussion we use notation from  Section \ref{sec:Markoff_tree} and  fix a non-zero polynomial $A$ of degree $\beta$. We also define the \emph{signature of a Markoff triple $P=(x,y,z)$} as the triple $S(P)=(\deg x,\deg y,\deg z)$. 
Given a positive integer $n$, our first step is to evaluate the function
\begin{equation}
\Ccal_A(n)=\#\{S(P):P\in M_A(K[t]),h(P)=n\}, 
\end{equation}
which counts  the number of signatures of Markoff triples of given height $n$. For that matter, we reinterpret Theorem \ref{thm:pol_tree} in graph-theoretical terms as below.
 
When composed with certain permutation of coordinates, the automorphism $\rho$ defined by \eqref{eq:pred} yields the following ``branching operations''
\begin{equation} \label{eq:branching}
\begin{minipage}{0.5\textwidth}
\centering
 \begin{forest}
  for tree={scale=1,grow=east,
growth parent anchor=east,
parent anchor=east,
child anchor=west}
        	  [{${P=(x,y,z)}$}
		[${\sigma_2(P)=(x,Axz-y,z)}$
	 ]
        		[${\sigma_1(P)=(Azy-x,y,z)}$ 
		]
      ] 
  \end{forest}
  \end{minipage}
\end{equation}
One can use Lemmas \ref{lem:funsol1} and \ref{lem:funsol2} to observe that if $P$ is a fundamental triple then both $\rho(P)$ and $\sigma_2(P)$ are fundamental triples with the same height as $P$, while $\sigma_1(P)$ is not a fundamental triple. Moreover, if $P$ is  a non-fundamental Markoff triple then 
$$
\rho(P)<h(P)<h(\sigma_1(P)),h(\sigma_2(P)).
$$
Therefore, the branching operations in \eqref{eq:branching} can be used to construct an infinite binary tree of Markoff triples. For instance, if $A=1$  then $Q=(t,t+2i,t^2+2i t-2)$ and \eqref{eq:branching} generate the tree in Figure \ref{fig:Markofftreepol}.
\begin{figure}[ht]
\centering
\begin{adjustbox}{width=\linewidth}
\begin{forest}
for tree={anchor=base west,grow=east,
growth parent anchor=east,
parent anchor=east,
child anchor=west,}
[${(t,t+2i,t^2+2i t-2)}$ 
              [${(t+2i,t^2+2i t-2,\text{$t^{3} + 4i t^{2} -7 t - 4i$})}$, tier=l1 
                      [$({t+2i,t^{3} + 4i t^{2} -7 t - 4i,t^4 + 6i t^3 - 16t^2 - 20i t + 10)}$, tier=l2   [$\cdots$, tier=l3 ][$\cdots$, tier=l3 ] ]
           [${(t^2+2i t-2,t^{3} + 4i t^{2} -7 t - 4i,t^{5} + 6i t^{4} - 17 t^{3} - 26i t^{2} + 21t + 6i)}$, tier=l2  [$\cdots$, tier=l3 ][$\cdots$, tier=l3 ] ] ]
          [${(t,t^2+2i t-2,t^3+2i t^2-3t-2i)}$, tier=l1 
            [${(t^2+2i t-2,t^3+2i t^2-3t-2i,t^{5} + 4i t^{4} -9t^{3} - 12i t^{2} +9 t + 4i)}$, tier=l2   [$\cdots$, tier=l3 ][$\cdots$, tier=l3 ]
           ]
            [${(t,t^3+2i t^2-3t-2i,t^4+2i t^3-4t^2-4i t+2)}$, tier=l2  [$\cdots$, tier=l3  ][$\cdots$, tier=l3 ]]
             ]
      ] 
\end{forest}
\end{adjustbox}
\caption{Tree with root $(t,t+2i,t^2+2i t-2)$} \label{fig:Markofftreepol}
\end{figure}
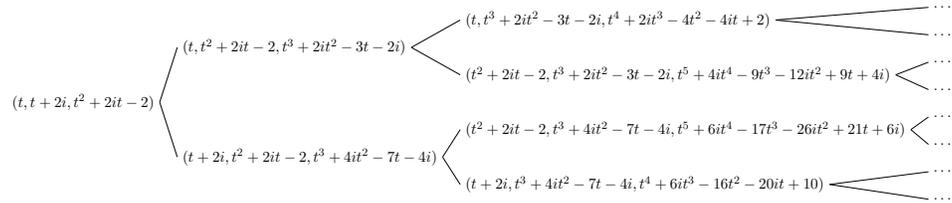 
Observe that the predecessor  of $Q$ is the fundamental triple $(2,t+2i,t)$. Consequently,  $Q=\sigma_1(2,t+2i,t)$ is the non-fundamental triple of smallest height on the  tree in Figure \ref{fig:Markofftreepol}.

Under this interpretation,  we can paraphrase Theorem \ref{thm:pol_tree} as saying that any Markoff triple  lies on a tree generated by the automorphisms \eqref{eq:branching} and rooted at
\begin{equation}\label{eq:fundsol}
 (0,a if,f)  \text{ or } \left(\frac{2a}{A},  af\pm \frac{2ai}{A},f\right)
\end{equation}
for some $f \in K[t]\backslash K$ and $a=\pm 1$. As done in Figure \ref{fig:Markofftreepol}, it is more convenient to assume that any tree of Markoff triples is rooted at the non-fundamental Markoff triple of smallest height. These triples are obtained from an application of $\sigma_1$ to \eqref{eq:fundsol}  and they are of the form
\begin{equation}\label{eq:nonfundsol}
 ( f,iaf,iaAf^2) \text{ or }  \left( f,a f\pm \frac{2ai}{A}, Aaf^2\pm {2aif}-\frac{2a}{A}\right)
\end{equation}
for some $f \in K[t]\backslash K$ and $a=\pm 1$.
 
The above discussion shows that the signature of a non-fundamental Markoff triple also lies on a tree. For instance, the tree of signatures of the Markoff triples in Figure \ref{fig:Markofftreepol} is given by Figure \ref{fig:10euclid} below. We call it the \emph{$(1,0)$-Euclid tree}, and it will play a crucial role in our remaining discussion.
\begin{figure}[H]
\centering
\begin{forest}
for tree={anchor=base west,grow=east,
growth parent anchor=east,
parent anchor=east,
child anchor=west,}
    [${(1,1,2)}$, [${(1,2,3)}$ 
              [${(1,3,4)}$, tier=l1 
                      [$\cdots$, tier=l2   ]
           [$\cdots$, tier=l2  ] ]
          [${(2,3,5)}$, tier=l1 
            [$\cdots$, tier=l2  
           ]
            [$\cdots$, tier=l2 ]
             ]
      ] ]
\end{forest}
\caption{$(1,0)$-Euclid Tree} \label{fig:10euclid}
\end{figure}
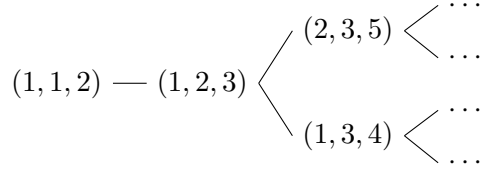 

More generally, let  $\alpha> 0$ be an integer and  let $(\alpha,\alpha,2\alpha+\beta)$ represent the signature of a non-fundamental triple in \eqref{eq:nonfundsol}. 
The \emph{$(\alpha,\beta)$-Euclid tree}  is the infinite binary tree (see Figure \ref{fig:euclidtree}) with root  $(\alpha,\alpha,2\alpha+\beta)$ and  branching operations\footnote{When $\beta=0$, trees of triple of integers given by these  branching operations have appeared in the literature under the name of Euclid trees because of their relationship with the euclidean algorithm, see for instance \cite{mcGinn}.} 
\begin{equation} \label{eq:euclid_branching}
\begin{minipage}{0.55\textwidth}
\centering
 \begin{forest}
  for tree={scale=1,grow=east,
growth parent anchor=east,
parent anchor=east,
child anchor=west}
        	  [{${T=(\tau_1,\tau_2,\tau_3)}$}
		[${\Gamma_{\beta,2}(T)=(\tau_1,\tau_3,\tau_1+\tau_3+\beta)}$
	 ]
        		[${\Gamma_{\beta,1}(T)=(\tau_2,\tau_3,\tau_2+\tau_3+\beta)}$
		]
      ] 
  \end{forest}
  \end{minipage}
\end{equation}
representing the signatures of the triples in \eqref{eq:branching}. 

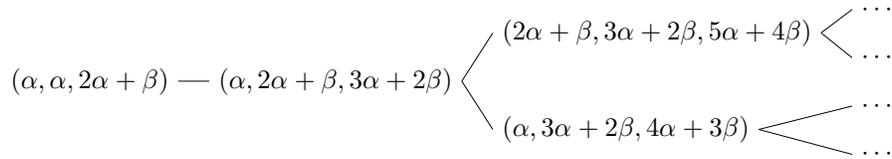
\begin{figure}[H]
\centering
\begin{adjustbox}{width=.95\linewidth}
\begin{forest}
for tree={anchor=base west,grow=east,
growth parent anchor=east,
parent anchor=east,
child anchor=west,}
    [${(\alpha,\alpha,2\alpha+\beta)}$, [${(\alpha,2\alpha+\beta,3\alpha+2\beta)}$ 
              [${(\alpha,3\alpha+2\beta,4\alpha+3\beta)}$, tier=l1 
                      [$\cdots$, tier=l2   ]
           [$\cdots$, tier=l2  ] ]
          [${(2\alpha+\beta,3\alpha+2\beta,5\alpha+4\beta)}$, tier=l1 
            [$\cdots$, tier=l2  
           ]
            [$\cdots$, tier=l2 ]
             ]
      ] ]
\end{forest}
\end{adjustbox}
\caption{The $(\alpha,\beta)$-Euclid Tree} \label{fig:euclidtree}
\end{figure} 
Our first result in this section relates $\Ccal_A(n)$ to
$$
\#\{T: \text{$T$ is on some $(\alpha,\beta)$-Euclid tree and $\max T=n$}\},
$$
which counts the number of triples on different $(\alpha,\beta)$-Euclid trees for which $n$ appear as a maximum. 
\begin{lemma}\label{lem:cacb}
 Let $n$ be a positive integer. If $\beta\neq 0$ then
$$
 \Ccal_A(n)=\#\{T: \text{$T$ is on some $(\alpha,\beta)$-Euclid tree and $\max T=n$}\}+1.
$$
 If $\beta=0$ then
$$
 \Ccal_A(n)=\#\{T: \text{$T$ is on some $(\alpha,\beta)$-Euclid tree and $\max T=n$}\}+2.
$$
\end{lemma}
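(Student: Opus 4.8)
The plan is to split the count $\Ccal_A(n)$ according to whether the underlying Markoff triple is fundamental or non-fundamental, show that the non-fundamental part is exactly the Euclid-tree count, and verify that the fundamental part contributes the additive constant $1$ or $2$. Throughout I assume $i\in K$, since otherwise $M_A(K[t])=\emptyset$ by Theorem \ref{thm:pol_tree} and there is nothing to count. Write $\mathcal{S}_n$ for the set of signatures $S(P)$ of Markoff triples $P$ with $h(P)=n$, so that $\Ccal_A(n)=\#\mathcal{S}_n$, and partition $\mathcal{S}_n=\mathcal{S}_n^{\mathrm{f}}\sqcup\mathcal{S}_n^{\mathrm{nf}}$ into the signatures coming from fundamental and from non-fundamental triples, respectively (that this is genuinely a disjoint union is one of the things to check).

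First I would handle the non-fundamental triples. By the discussion following Theorem \ref{thm:pol_tree}, every non-fundamental triple lies on a tree generated by the branching operations \eqref{eq:branching} and rooted at one of the smallest-height non-fundamental triples \eqref{eq:nonfundsol}; writing $\alpha=\deg f\ge 1$, a direct degree computation shows this root has signature $(\alpha,\alpha,2\alpha+\beta)$. The key step is to verify that the signature map intertwines the Markoff branching \eqref{eq:branching} with the Euclid branching \eqref{eq:euclid_branching}: for $T=S(P)=(\tau_1,\tau_2,\tau_3)$ one computes, after reordering, $S(\sigma_2(P))=(\tau_1,\tau_3,\tau_1+\tau_3+\beta)=\Gamma_{\beta,2}(T)$ and $S(\sigma_1(P))=(\tau_2,\tau_3,\tau_2+\tau_3+\beta)=\Gamma_{\beta,1}(T)$. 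The only subtle point is that no leading-term cancellation occurs, i.e.\ that $\deg(Axz-y)=\beta+\tau_1+\tau_3$ and $\deg(Azy-x)=\beta+\tau_2+\tau_3$; this holds because the subtracted term has strictly smaller degree and the product in question has nonzero leading coefficient, as in the leading-coefficient bookkeeping of Lemma \ref{lem:orbit}, using also that every node of an $(\alpha,\beta)$-Euclid tree has all coordinates $\ge\alpha\ge 1$. Consequently the signatures of the non-fundamental triples are exactly the nodes of the $(\alpha,\beta)$-Euclid trees, one tree for each $\alpha\ge 1$, every node being realized by an actual triple via \eqref{eq:nonfundsol} and its descendants. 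Hence $\mathcal{S}_n^{\mathrm{nf}}$ coincides as a set with $\{T:\text{$T$ lies on some $(\alpha,\beta)$-Euclid tree and }\max T=n\}$.

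Next I would enumerate the fundamental signatures using Lemmas \ref{lem:funsol1} and \ref{lem:funsol2}. When $\beta\ne 0$ the only fundamental triples of height $n$ are the $(0,\pm if,f)$ with $\deg f=n$, all of which have the single signature $(\deg 0,n,n)$, so $\#\mathcal{S}_n^{\mathrm{f}}=1$. When $\beta=0$ there is in addition the family $\bigl(\tfrac{2a}{A},\,af\pm\tfrac{2ai}{A},\,f\bigr)$, which contributes the further signature $(0,n,n)$, so $\#\mathcal{S}_n^{\mathrm{f}}=2$. Finally I would establish the disjointness of $\mathcal{S}_n^{\mathrm{f}}$ and $\mathcal{S}_n^{\mathrm{nf}}$: every Euclid-tree node has strictly positive smallest coordinate, whereas a fundamental signature has smallest coordinate equal to $\deg 0$ or to $0$, and hence cannot appear on any Euclid tree; for $\beta=0$ one also notes $(\deg 0,n,n)\ne(0,n,n)$. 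Combining the three counts then gives $\Ccal_A(n)=\#\mathcal{S}_n^{\mathrm{nf}}+\#\mathcal{S}_n^{\mathrm{f}}$ with the stated constant.

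I expect the main obstacle to be the branching-correspondence step: one must confirm that applying $\sigma_1,\sigma_2$ to genuine polynomial triples never produces an unexpected drop in degree, so that the integer tree of signatures reproduces the Euclid branching exactly rather than only up to sporadic collisions. The bookkeeping around the signature of the zero polynomial, namely the first coordinate $\deg 0$, is the other place where care is needed, both in defining $S(P)$ consistently and in separating the fundamental signatures from the Euclid-tree nodes.
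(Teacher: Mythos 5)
Your proposal is correct and follows essentially the same route as the paper: identify the signatures of non-fundamental triples with the nodes of the $(\alpha,\beta)$-Euclid trees, then add the one or two signatures of fundamental triples coming from Lemmas \ref{lem:funsol1} and \ref{lem:funsol2}. You simply make explicit the details the paper asserts ``by construction'' --- the no-cancellation check that the signature map intertwines \eqref{eq:branching} with \eqref{eq:euclid_branching}, and the disjointness of the fundamental signatures from the Euclid-tree nodes --- which is a welcome elaboration rather than a different argument.
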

\begin{proof}
 By construction, any triple on an $(\alpha,\beta)$-Euclid tree is the signature of some non-fundamental Markoff triple, and vice-versa. Therefore, the number of signatures of non-fundamental Markoff triples $P$ with height $h(P)=n\geq 1$ is given by
$$\#\{T: \text{$T$ is on some $(\alpha,\beta)$-Euclid tree and $\max T=n$}\}.$$ 
 
 To finish, we need to take into account the signatures of the fundamental triples with height $n$. According to  \eqref{eq:fundsol}, they are $(0,n,n)$ and $(-\infty,n,n)$.
 
Therefore, the result follows from Lemma \ref{lem:funsol1}  and Lemma \ref{lem:funsol2}.
\end{proof}

In light of the previous result, we define the function
$$
C_\beta(n)=\#\{T: \text{$T$ is on some $(\alpha,\beta)$-Euclid tree and $\max T=n$}\}+1.
$$
Notice that Lemma \ref{lem:cacb} implies
\begin{equation}\label{eq:cacb}
 \Ccal_A(n)=
\left\{\begin{array}{ll}
 C_\beta(n), & \text{if $\deg A\neq 0$}\\
  C_0(n) +1,& \text{if $\deg A= 0$}
\end{array}\right.
\end{equation}
In particular, $\Ccal_A(n)$ depends only on the degree of $A$, and not on $A$ itself.

Most of the remainder of this section  is used to compute $C_\beta(n)$. This will be done by describing the triples on a general $(\alpha,\beta)$-Euclid tree in terms of triples on the $(1,0)$-Euclid tree. The following definition will be useful in this task.

The \emph{$j$-th layer $L_j$} of the $(\alpha,\beta)$-Euclid tree is the subset of the vertices of the tree defined recursively by
$L_0=\{(\alpha,\alpha,2\alpha+\beta)\}$ and
$$
L_j=\{\Gamma_{\beta,1}(P),\Gamma_{\beta,2}(P): P\in L_{j-1}\},
$$
for $j\geq 1$. In other words, $L_j$ is the set of triples we obtain after iterating the branching operations $j$ times from $(\alpha,\alpha,2\alpha+\beta)$. Evidently, any triple on the $(\alpha,\beta)$-Euclid tree lies on some layer of the tree.

\begin{lemma}\label{lem:jlay}
 Let $\alpha>0$ and $j,\beta\geq 0$ be integers. 
 
$T$ is on the $j$-th layer of the $(\alpha,\beta)$-Euclid tree if and only if 
\begin{equation}\label{eq:jlay}
T=(b\alpha+(b-1)\beta,c\alpha+(c-1)\beta,d\alpha+(d-1)\beta), 
\end{equation}
for some triple $(b,c,d)$  on the $j$-th layer of the $(1,0)$-Euclid tree.  
\end{lemma}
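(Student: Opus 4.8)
The plan is to exhibit an explicit bijection between the $j$-th layer of the $(1,0)$-Euclid tree and the $j$-th layer of the $(\alpha,\beta)$-Euclid tree, and to check that it is compatible with the branching operations \eqref{eq:euclid_branching}. To this end I would introduce the coordinatewise map $\phi$ sending a triple $(b,c,d)$ of integers to
$$
\phi(b,c,d)=(b\alpha+(b-1)\beta,\ c\alpha+(c-1)\beta,\ d\alpha+(d-1)\beta),
$$
so that \eqref{eq:jlay} reads exactly $T=\phi(b,c,d)$. Writing $L_j$ for the $j$-th layer of the $(\alpha,\beta)$-Euclid tree and $L_j'$ for the $j$-th layer of the $(1,0)$-Euclid tree, the assertion of the lemma is precisely the set equality $L_j=\phi(L_j')$, where the displayed ``for some'' encodes the existential and the biconditional encodes both inclusions. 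I would prove this equality by induction on $j$. For the base case, $L_0'=\{(1,1,2)\}$ and a direct substitution gives $\phi(1,1,2)=(\alpha,\alpha,2\alpha+\beta)$, which is the root of the $(\alpha,\beta)$-Euclid tree; hence $L_0=\phi(L_0')$.

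The heart of the argument is the intertwining identity
$$
\phi(\Gamma_{0,k}(B))=\Gamma_{\beta,k}(\phi(B))\qquad(k=1,2)
$$
for every triple $B=(b,c,d)$. This is a one-line calculation. For $k=1$, applying $\Gamma_{\beta,1}$ to $\phi(B)$ yields the third coordinate $(c\alpha+(c-1)\beta)+(d\alpha+(d-1)\beta)+\beta=(c+d)\alpha+(c+d-1)\beta$, which is exactly the third coordinate of $\phi(\Gamma_{0,1}(B))=\phi(c,d,c+d)$; the first two coordinates agree trivially. The case $k=2$ is identical with $(b,d,b+d)$ in place of $(c,d,c+d)$. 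The conceptual point is that the affine shift $-\beta$ built into each slot of $\phi$ combines with the extra $+\beta$ of the branching rule so as to reproduce the pattern $m\mapsto m\alpha+(m-1)\beta$ on the freshly created coordinate.

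Granting the intertwining identity, the inductive step is immediate. Assuming $L_j=\phi(L_j')$, I would compute
$$
L_{j+1}=\{\Gamma_{\beta,k}(T):T\in L_j,\ k=1,2\}=\{\Gamma_{\beta,k}(\phi(B)):B\in L_j',\ k=1,2\}=\{\phi(\Gamma_{0,k}(B)):B\in L_j',\ k=1,2\}=\phi(L_{j+1}'),
$$
using the induction hypothesis in the second equality, the intertwining identity in the third, and the recursive definition of $L_{j+1}'$ in the last. This closes the induction and proves the lemma. I do not expect a genuine obstacle here: the computation is elementary, and the only thing demanding care is keeping the two branching rules $\Gamma_{\cdot,1}$ and $\Gamma_{\cdot,2}$ correctly aligned across the two trees while tracking the $(m-1)\beta$ shift, so that the intertwining identity is verified for both values of $k$ rather than just one.
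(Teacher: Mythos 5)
Your proof is correct and follows essentially the same route as the paper: an induction on $j$ built on the computation that the branching operations $\Gamma_{\beta,k}$ correspond to $\Gamma_{0,k}$ under the substitution $m\mapsto m\alpha+(m-1)\beta$. The only difference is organizational --- you package both directions of the biconditional into the single set equality $L_j=\phi(L_j')$ via the intertwining identity, whereas the paper runs two separate inductions, one for each implication; your version is slightly tidier but the underlying calculation is identical.
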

\begin{proof}
We first prove by induction on $j$ that a triple on the $j$-th layer of the $(\alpha,\beta)$-Euclid tree is given by \eqref{eq:jlay}. Notice that the statement is true for $j=0$, since the $0$-th layers for both trees are $\{(\alpha,\alpha,2\alpha+\beta)\}$ and $\{(1,1,2)\}$. By definition and the induction hypothesis, any triple on the $(j+1)$-th layer of the $(\alpha,\beta)$-Euclid tree is of the form $\Gamma_{\beta,1}(T)$ or $\Gamma_{\beta,2}(T)$ where $T$ is given by  \eqref{eq:jlay} and $(b,c,d)$ is on the $j$-th layer of the $(1,0)$-Euclid tree. Since $\Gamma_{0,1}(b,c,d)=(c,d,c+d)$ and  $\Gamma_{0,2}(b,c,d)=(b,d,b+d)$  are on the $(j+1)$-th layer of the $(1,0)$-Euclid tree, 
$$
\Gamma_{\beta,1}(T)=(c\alpha+(c-1)\beta,d\alpha+(d-1)\beta, (c+d)\alpha+(c+d-1)\beta)
$$
and
$$
\Gamma_{\beta,2}(T)=(b\alpha+(b-1)\beta,d\alpha+(d-1)\beta, (b+d)\alpha+(b+d-1)\beta)
$$
have the desired form.

The converse is also proved by induction on $j$, with the base case $j=0$ being trivial. Assume $(b,c,d)$ is a  triple on the $(j+1)$-th layer of the $(1,0)$-Euclid tree. Then, by definition, there exists $(e,f,g)$ on the $j$-th layer of the $(1,0)$-Euclid tree such that either 
$$
(b,c,d)=(f,g,f+g)=\Gamma_{\beta,1}(e,f,g)
$$
or
$$
(b,c,d)=(e,g,e+g)=\Gamma_{\beta,2}(e,f,g).
$$ 
This shows that if $T$ is given by \eqref{eq:jlay} then $T=\Gamma_{\beta,1}(P)$ or $T=\Gamma_{\beta,2}(P)$, where
$$
(e\alpha+(e-1)\beta,f\alpha+(g-1)\beta,g\alpha+(g-1)\beta)
$$
 is on the $j$-th layer of the $(\alpha,\beta)$-Euclid tree, by the induction hypothesis. Consequently, $T$ is on the $(j+1)$-th layer of the $(\alpha,\beta)$-Euclid tree and the result follows.
\end{proof}
The next result uses Lemma \ref{lem:jlay} to reduce the computation of $C_\beta(n)$ to the following counting function on the $(1,0)$-Euclid tree
$$
E(n)=\#\{T: \text{$T$ is on the $(1,0)$-Euclid tree and $\max T=n$}\},
$$
for $n\geq 2$, and $E(1)=1$. 
\begin{lemma} \label{lem:cbeta}
Let   $\beta\geq 0$ and $n>0$ be  integers. Then
$$
C_\beta(n)=\sum_{\genfrac{}{}{0pt}{}{d \mid (n + \beta)}{\beta d <{n + \beta}}} E(d).
$$ 
\end{lemma}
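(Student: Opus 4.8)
The plan is to transport the whole count onto the single $(1,0)$-Euclid tree by means of the affine substitution supplied by Lemma \ref{lem:jlay}. Write $\phi(m)=m\alpha+(m-1)\beta=m(\alpha+\beta)-\beta$ and let $\phi$ act coordinatewise on triples. Since $\alpha>0$ and $\beta\ge 0$ give $\alpha+\beta>0$, the map $\phi$ is a strictly increasing affine bijection of $\Z$, hence injective on triples and max-preserving (it carries the largest coordinate of a triple to the largest coordinate of its image). Lemma \ref{lem:jlay} says that, layer by layer, $\phi$ matches the triples of the $(1,0)$-tree with those of the $(\alpha,\beta)$-tree; combined with the injectivity of $\phi$ this gives a bijection between the \emph{set} of triples on the $(1,0)$-tree and the set of triples on the $(\alpha,\beta)$-tree. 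Thus $T=\phi(b,c,d)$ lies on the $(\alpha,\beta)$-tree with $\max T=n$ exactly when $(b,c,d)$ lies on the $(1,0)$-tree with $\max(b,c,d)=d$ and $n=\phi(d)=d(\alpha+\beta)-\beta$, i.e. $n+\beta=d(\alpha+\beta)$.

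First I would organize the set $\{T:\ T\text{ on some }(\alpha,\beta)\text{-tree},\ \max T=n\}$ by the tree on which each triple sits, which requires that a triple determine its tree. The point is that every $(1,0)$-triple $(b,c,d)$ satisfies $\gcd(b,c,d)=1$ (true for the root $(1,1,2)$, and preserved by $\Gamma_{0,1}$ and $\Gamma_{0,2}$ since the third entry is always the sum of the first two). Hence for $T=\phi(b,c,d)=(t_1,t_2,t_3)$ one recovers $\alpha+\beta=\gcd(t_1+\beta,t_2+\beta,t_3+\beta)$, so $T$ belongs to at most one $(\alpha,\beta)$-tree. The contributions of different $\alpha$ are therefore disjoint, and the set-cardinality equals the sum over $\alpha>0$ of the per-tree counts.

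Next I would reparametrize that sum by $d$ instead of $\alpha$. The relation $n+\beta=d(\alpha+\beta)$ shows that the trees carrying a triple of maximum $n$ are exactly those with $d\mid(n+\beta)$ and $\alpha=(n+\beta)/d-\beta$, the constraint $\alpha>0$ being equivalent to $\beta d<n+\beta$. For each admissible $d$ the per-tree count is the number of $(1,0)$-triples of maximum $d$, which for $d\ge 2$ is $E(d)$. The sole exception is $d=1$: here $\phi$ would ask for a $(1,0)$-triple of maximum $1$, of which there are none (the root has maximum $2$), so the associated tree — the $(n,\beta)$-tree with root $(n,n,2n+\beta)$ — contributes no triple of maximum $n$. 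This yields
\[
\#\{T:\ T\text{ on some }(\alpha,\beta)\text{-tree},\ \max T=n\}=\sum_{\substack{d\mid(n+\beta)\\ \beta d<n+\beta,\ d\ge 2}}E(d).
\]

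Finally I would feed this into the definition $C_\beta(n)=\#\{\dots\}+1$. Because $d=1$ always divides $n+\beta$ and satisfies $\beta\cdot 1<n+\beta$, the convention $E(1)=1$ lets the extra $+1$ be absorbed as the missing $d=1$ term, giving $C_\beta(n)=\sum_{d\mid(n+\beta),\ \beta d<n+\beta}E(d)$, as claimed. I expect the main obstacle to be the bookkeeping at the boundary value $d=1$: one must verify that the $(n,\beta)$-tree genuinely contributes nothing (its entire vertex set has maximum $\ge 2n+\beta>n$) and that this deficit is precisely the $+1$ built into the definition of $C_\beta$, matched by the artificial value $E(1)=1$. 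The disjointness of distinct trees, via the $\gcd$ computation, is the other ingredient needed to keep the set-count honest.
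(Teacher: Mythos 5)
Your proof is correct and follows essentially the same route as the paper: transport the count to the $(1,0)$-Euclid tree via Lemma \ref{lem:jlay}, reparametrize by the divisor $d$ through $d(\alpha+\beta)=n+\beta$ (with $\alpha>0$ becoming $\beta d<n+\beta$), and absorb the $+1$ in the definition of $C_\beta$ as the missing $E(1)$ term. Your explicit $\gcd$ argument showing that a triple determines its $(\alpha,\beta)$-tree is a nice touch that makes precise a uniqueness claim the paper leaves implicit.
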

\begin{proof}

To prove our result, we let $n$ be a positive integer. First notice that if $n=1$ then the conditions $\beta d<n+\beta$ and $d\mid (n+\beta)$ imply that $d=1$. Therefore, 
$$
C_\beta(1)=\sum_{\genfrac{}{}{0pt}{}{d \mid (1 + \beta)}{\beta d <{1 + \beta}}} E(d)=E(1)=1
$$ holds, and we may assume that $n\geq 2$.

Let $T$ be a triple on an $(\alpha,\beta)$-Euclid tree such that $\max T=n\geq 2$. Thus Lemma \ref{lem:jlay} shows that   there exists a unique non-negative integer $d$ such that  $n=d\alpha+(d-1)\beta$ and $d$ is the maximum of a triple on the $(1,0)$-Euclid tree. Moreover, by rewriting this expression for $n$ as $d(\alpha + \beta) = \beta + n$ we see that  $d \mid (n+\beta)$ and that $({n+\beta)}/{d}=\alpha + \beta > \beta$. Also notice that $d>1$, otherwise $n=\alpha$ is not a maximum on the $(\alpha,\beta)$-Euclid tree. In conclusion,
$$
\#\{T: \text{$T$ is on some $(\alpha,\beta)$-Euclid tree and $\max T=n$}\}\leq \sum_{\genfrac{}{}{0pt}{}{1\neq d \mid (n + \beta)}{\beta d <{n + \beta}}} E(d).
$$

On the other hand, suppose that $d>1$ is the maximum of a triple $(c,b,d)$ on the $(1,0)$-Euclid tree satisfying  $d \mid (n+\beta)$ and $\beta d<n+\beta$. Therefore, there exists a unique integer $r>0$ satisfying $dr=n+\beta$ and $\beta<r$. As a consequence of Lemma \ref{lem:jlay},  $n=d(r-\beta)+(d-1)\beta$ is the maximum of a triple on the $(r-\beta,\beta)$-Euclid tree and, consequently,
$$
\#\{T: \text{$T$ is on some $(\alpha,\beta)$-Euclid tree and $\max T=n$}\}= \sum_{\genfrac{}{}{0pt}{}{1\neq d \mid (n + \beta)}{\beta d <{n + \beta}}} E(d).
$$
Therefore,
$$
C_\beta(n)=\sum_{\genfrac{}{}{0pt}{}{1\neq d \mid (n + \beta)}{\beta d <{n + \beta}}} E(d)+1=\sum_{\genfrac{}{}{0pt}{}{1\neq d \mid (n + \beta)}{\beta d <{n + \beta}}} E(d)+E(1)=\sum_{\genfrac{}{}{0pt}{}{d \mid (n + \beta)}{\beta d <{n + \beta}}} E(d),
$$
as desired.
\end{proof}

In the next result we compute $C_0(n)$ explicitly. This in turn allows us to find an explicit formula for $E(n)$ and, consequently,  $C_\beta(n)$.
\begin{lemma}\label{lem:cofloor}
 Let $n$ be a positive integer. Then
 $$
 C_0(n)=\left\lfloor \frac{n}{2}\right\rfloor+1
 $$
\end{lemma}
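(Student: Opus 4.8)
The plan is to count $C_0(n)$ by setting up an explicit bijection. Since $C_0(n)-1$ is by definition the size of
$$
\{T : \text{$T$ is on some $(\alpha,0)$-Euclid tree and $\max T=n$}\},
$$
it suffices to match these triples with the pairs in $\{(x,y):1\le x\le y,\ x+y=n\}$, a set of visibly $\floor{n/2}$ elements. The first step is to record the shape of a triple on an $(\alpha,0)$-Euclid tree: the identity $\tau_3=\tau_1+\tau_2$ together with the ordering $\tau_1\le\tau_2\le\tau_3$ holds at the root $(\alpha,\alpha,2\alpha)$ and is preserved by both $\Gamma_{0,1}$ and $\Gamma_{0,2}$ (each produces a triple whose last entry is the sum of its first two, in increasing order). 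Hence every such $T=(\tau_1,\tau_2,\tau_3)$ satisfies $\max T=\tau_3=\tau_1+\tau_2$ with $1\le\tau_1\le\tau_2$, so the assignment $\Phi:T\mapsto(\tau_1,\tau_2)$ lands in the target set, and it is injective because a triple of this form is determined by its first two coordinates.

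The surjectivity of $\Phi$ is the heart of the argument, and the one place I expect to do real work. Given a pair $(x,y)$ with $1\le x\le y$ and $x+y=n$, I would set $\alpha=\gcd(x,y)$ and $(a,b)=(x/\alpha,y/\alpha)$, a coprime pair with $a\le b$. By Lemma~\ref{lem:jlay} specialized to $\beta=0$, it is enough to show that $(a,b,a+b)$ lies on the $(1,0)$-Euclid tree, for then $\alpha\cdot(a,b,a+b)=(x,y,x+y)$ lies on the $(\alpha,0)$-Euclid tree and has $\Phi$-image $(x,y)$.

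To prove that every coprime pair appears, I would argue by descent, which is exactly the Euclidean-algorithm connection alluded to in the footnote. Reversing the branching operations sends a node $(a,b,a+b)$ with $a<b$ to the sorted coprime pair $\{a,\,b-a\}$ together with its sum $b$: this is again a node of the tree (it is the unique parent), its total sum $b$ is strictly smaller than $a+b$, and $\gcd(a,b-a)=\gcd(a,b)=1$ is preserved. Iterating this subtractive step therefore terminates, necessarily at the root $(1,1,2)$ when $a=b=1$; reading the chain forward exhibits $(a,b,a+b)$ as a node of the $(1,0)$-Euclid tree. This descent is the only genuinely non-formal ingredient, and I would present it carefully.

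With $\Phi$ shown to be a bijection, the counted set has exactly $\floor{n/2}$ elements, whence $C_0(n)=\floor{n/2}+1$. I would also remark that the decomposition is unambiguous: every node of the $(\alpha,0)$-Euclid tree has $\gcd(\tau_1,\tau_2)=\alpha$, so a given triple lies on exactly one such tree and $\alpha$ is recovered as $\gcd(\tau_1,\tau_2)$; this makes the "on some $(\alpha,0)$-Euclid tree" count an honest count of distinct triples. Finally I would sanity-check the small cases $n=1$ (no pairs, $C_0(1)=1$) and $n=2$ (the single pair $(1,1)$ coming from the root, $C_0(2)=2$) against the claimed formula.
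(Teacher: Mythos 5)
Your proof is correct and follows essentially the same route as the paper: both identify the triples with $\max T=n$ on some $(\alpha,0)$-Euclid tree with the partitions $n=e+f$ into two positive parts, and both establish surjectivity by a subtractive (Euclidean) descent back to a root $(\alpha,\alpha,2\alpha)$, with injectivity coming from $\tau_3=\tau_1+\tau_2$. The only organizational difference is that you first divide out $\gcd(x,y)$, descend on the $(1,0)$-tree, and rescale via Lemma~\ref{lem:jlay}, whereas the paper descends directly on $(e,f,e+f)$ using a map $\gamma$ and the well-ordering principle to reach $(\alpha,\alpha,2\alpha)$ with $\alpha$ emerging as the gcd; this difference is purely cosmetic.
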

\begin{proof}
We may assume that $n\geq 3$, as the desired formula is easily verified otherwise. Given \eqref{eq:cacb}, it is enough to show that
$$
\#\{T: \text{$T$ is on some $(\alpha,0)$-Euclid tree and $\max T=n$}\}=P_n,
$$
where $P_n$ is the  number of partitions of $n$ into two parts. Indeed, by definition, $P_n$ is the number of solutions of the equation $n=e+f$, where $e$ and $f$ are positive integers. Because of the linear and symmetric relationship between $e$ and  $f$, we conclude that  $P_n=\#\{e\in\Z:1\leq e\leq n/2\}=\lfloor n/2\rfloor$.

Let $T$ be  a triple on  some $(\alpha,0)$-Euclid tree for which $\max T=n$. Since $n>2$, $T$ is not the root of the $(\alpha,0)$-Euclid tree. In particular, $T=\Gamma_{0,1}(T')$ or $T=\Gamma_{0,2}(T')$ for some $T'=(b,c,d)$ on  the $(\alpha,0)$-Euclid tree. Consequently, \eqref{eq:euclid_branching} shows that either $n=b+d$ or $n=c+d$. In any case,  to the given  triple $T$ we can associate a unique partition of $n$ into two parts and so
\begin{equation}\label{eq:partition}
\#\{T: \text{$T$ is on some $(\alpha,0)$-Euclid tree and $\max T=n$}\}\leq P_n. 
\end{equation}

Conversely, consider a partition of $n=e+f$, for integers $0<e\leq f$. Observe that to prove that equality holds in \eqref{eq:partition}, it is enough to show that $T=(e,f,e+f)$ is on the $(\alpha,0)$-Euclid tree. For that matter, define the function
$$
\gamma(\tau_1,\tau_2,\tau_3)=\left\{
\begin{array}{ll}
 (\tau_1,\tau_2-\tau_1,\tau_2), & \text{if $\tau_2\geq \tau_1$}\\
  (\tau_2,\tau_1-\tau_2,\tau_1), & \text{if $\tau_2\leq \tau_1$}
\end{array}
\right.
$$  
Clearly,  for any $k\geq 0$, 
\begin{equation}\label{eq:gammak}
\gamma^k(T)=(\tau_1,\tau_2,\tau_1+\tau_2) 
\end{equation}
for some integers $\tau_1,\tau_2$.  This shows that either $
\Gamma_{0,1}(\gamma^{k+1}(T))=\gamma^k(T)
$ or 
$
\Gamma_{0,2}(\gamma^{k+1}(T))=\gamma^k(T)
$. Consequently, if we use the branching operations $\Gamma_{0,1}$ and $\Gamma_{0,2}$ to create a tree rooted at $\gamma^k(T)$, for some $k\geq0$, then $T$ is  on this tree. Therefore, $T$ is on some $(\alpha,0)$-Euclid tree, if there are  integers $k\geq 0$ and $\alpha>0$ such that $\gamma^k(T)=(\alpha,\alpha,2\alpha)$.

Note that if $P=(\tau_1,\tau_2,\tau_3)$ is such that $\tau_1\neq \tau_2$ then
$$
\max\gamma(P)<\max P.
$$
By the well-ordering principle, we cannot have $\max\gamma^{k+1}(T)<\max \gamma^k(T)$, for all $k\geq 0$. This fact and \eqref{eq:gammak} imply the existence of integers $k\geq 0$ and $\alpha>0$ such that
$$
\gamma^k(T)=(\alpha,\alpha,2\alpha),
$$
as desired.
\end{proof}
\begin{corollary}
  Let $n$ be a positive integer. Then:
 $$
E(n)=\sum_{d\mid n}\mu(d)\left(\left\lfloor \frac{n}{2d}\right\rfloor+1\right),
 $$
 where $\mu(n)$ is the M\"obius function.
\end{corollary}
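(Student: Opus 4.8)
The plan is to recognize that this corollary is a routine Möbius inversion of the divisor-sum identity hidden inside Lemma~\ref{lem:cbeta} and Lemma~\ref{lem:cofloor}. The first step is to specialize Lemma~\ref{lem:cbeta} to the case $\beta=0$. In that formula the summation condition $\beta d<n+\beta$ becomes $0<n$, which holds for every positive integer $n$, so the only surviving constraint is $d\mid n$. Hence Lemma~\ref{lem:cbeta} collapses to the clean statement
$$
C_0(n)=\sum_{d\mid n}E(d).
$$

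Next I would invoke Möbius inversion. Viewing $C_0$ as the divisor sum of the arithmetic function $E$, the standard inversion formula yields
$$
E(n)=\sum_{d\mid n}\mu(d)\,C_0\!\left(\frac{n}{d}\right).
$$
Finally I would substitute the explicit value of $C_0$ supplied by Lemma~\ref{lem:cofloor}, namely $C_0(m)=\lfloor m/2\rfloor+1$, evaluated at $m=n/d$. Since $\lfloor (n/d)/2\rfloor=\lfloor n/(2d)\rfloor$, this gives
$$
E(n)=\sum_{d\mid n}\mu(d)\left(\left\lfloor\frac{n}{2d}\right\rfloor+1\right),
$$
which is exactly the claimed identity.

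There is no real obstacle here; the only point that requires a moment's care is confirming that the side condition $\beta d<n+\beta$ in Lemma~\ref{lem:cbeta} truly becomes vacuous (equivalently, that the divisor $d=1$ is included) when $\beta=0$, so that the sum genuinely runs over all divisors of $n$ and Möbius inversion applies without an exceptional term. Everything else is the invocation of two prior lemmas and the standard inversion theorem.
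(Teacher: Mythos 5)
Your proposal is correct and follows exactly the paper's argument: specialize Lemma~\ref{lem:cbeta} at $\beta=0$ to obtain $\lfloor n/2\rfloor+1=\sum_{d\mid n}E(d)$ via Lemma~\ref{lem:cofloor}, then apply M\"obius inversion. You simply write out the inversion step and the substitution $\lfloor (n/d)/2\rfloor=\lfloor n/(2d)\rfloor$ more explicitly than the paper does.
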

\begin{proof}
 If we combine Lemmas \ref{lem:cbeta} and \ref{lem:cofloor}, then
 $$
 \left\lfloor \frac{n}{2}\right\rfloor+1=\sum_{d\mid n} E(d).
 $$
 The result follows from an application of the M\"obius inversion formula.
\end{proof}

As discussed in the introduction, when $\beta =0$, we obtain the following asymptotic formula that bears a striking resemblance with the results of Silverman and Zagier.

\begin{corollary}[Theorem \ref{thm:counting}]\label{cor:counting}
 If $A$ is a non-zero constant then
 $$
\#\{S(P):P\in M_A(k[t]),h(P)\leq H\} \sim \frac{1}{4}H^2,
 $$
 as $H\longrightarrow \infty$.
\end{corollary}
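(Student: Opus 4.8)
The plan is to reduce the counting function to a finite sum of the quantities $\Ccal_A(n)$ already evaluated above, and then estimate that sum. The first observation is that the signature $S(P)=(\deg x,\deg y,\deg z)$ completely determines the height $h(P)=\max(\deg x,\deg y,\deg z)$, since the latter is just the maximum entry of the former. Consequently, two Markoff triples with the same signature have the same height, and the set of signatures of height at most $H$ is the disjoint union, over $1\leq n\leq H$, of the signatures of height exactly $n$. (Recall that every Markoff triple has $h(P)\geq 1$, so no height-zero terms occur.) This gives the exact identity
$$
\#\{S(P):P\in M_A(K[t]),\,h(P)\leq H\}=\sum_{n=1}^{H}\Ccal_A(n).
$$

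Next I would evaluate the summand. Since $A$ is a non-zero constant we have $\beta=\deg A=0$, so \eqref{eq:cacb} gives $\Ccal_A(n)=C_0(n)+1$, and Lemma \ref{lem:cofloor} gives $C_0(n)=\floor{n/2}+1$. Combining these yields the clean formula $\Ccal_A(n)=\floor{n/2}+2$ for every $n\geq 1$. Substituting into the identity above,
$$
\#\{S(P):P\in M_A(K[t]),\,h(P)\leq H\}=\sum_{n=1}^{H}\left(\floor{n/2}+2\right)=2H+\sum_{n=1}^{H}\floor{n/2}.
$$

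It then remains to estimate the floor sum. Writing $\floor{n/2}=\tfrac{1}{2}\bigl(n-(n\bmod 2)\bigr)$, one gets $\sum_{n=1}^{H}\floor{n/2}=\tfrac{1}{2}\cdot\tfrac{H(H+1)}{2}+O(H)=\tfrac{1}{4}H^2+O(H)$. Since the remaining $2H$ term is also $O(H)$, the whole expression equals $\tfrac{1}{4}H^2+O(H)$, which is asymptotic to $\tfrac{1}{4}H^2$ as $H\to\infty$, as claimed.

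There is no real obstacle here, as the heavy lifting was done in Lemmas \ref{lem:cbeta} and \ref{lem:cofloor}; the proof is essentially bookkeeping. The only points that warrant care are the initial reduction — checking that the signature determines the height so that the count genuinely splits as a sum over $n$ with no double-counting — and the fact that the lower-order contributions (the additive constant $2$ per height and the parity correction in the floor sum) are absorbed into the $O(H)$ error and do not affect the leading coefficient $\tfrac14$.
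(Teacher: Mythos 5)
Your proof is correct and follows essentially the same route as the paper: reduce the count to $\sum_{n=1}^{H}\Ccal_A(n)$, evaluate the summand as $\lfloor n/2\rfloor+2$ via \eqref{eq:cacb} and Lemma \ref{lem:cofloor}, and estimate the resulting sum (the paper sandwiches it using $x-1<\lfloor x\rfloor\leq x$, while you split off the parity term, but the estimates are equivalent). No gaps.
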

\begin{proof}
We may assume that $H$ is an integer. By definition, 
$$
\#\{S(P):P\in M_A(k[t]),h(P)\leq H\}=\sum_{n=1}^{H} \Ccal_A(n).
$$ 
Since $\deg A=\beta =0$,  \eqref{eq:cacb} and Lemma \ref{lem:cofloor} imply
$$
\sum_{n=1}^{H} \Ccal_A(n)= \sum_{n=1}^{H} \left(\left\lfloor\frac{n}{2}\right\rfloor+2\right).
$$
Since $x-1<\lfloor x\rfloor\leq x$, we arrive at
$$
\frac{1}{4}(H^2+5H)=\sum_{n=1}^{H} \left(\frac{n}{2}+1\right) < \sum_{n=1}^{H} \Ccal_A(n)\leq \sum_{n=1}^{H} \left(\frac{n}{2}+2\right)=\frac{1}{4}(H^2+9H),
$$
from which the result follows.
\end{proof}

\begin{corollary}
Suppose $K$ is a finite field with $q$ elements. If $A$ is non-constant  with $\deg A=\beta$ and $n$ is a positive integer then
$$
\#\{(x,y,z)\in M_A(K[t]):h(x,y,z)=n\}=4(q-1)\sum_{\genfrac{}{}{0pt}{}{d \mid (n + \beta)}{\beta d <{n + \beta}}} 
 q^{\frac{n + \beta}{d}-\beta} E (d)
$$
\end{corollary}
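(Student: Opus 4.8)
The plan is to promote the signature count of Lemma~\ref{lem:cbeta} to a count of honest triples in $M_A(K[t])$ by weighting each signature by the number of solutions that realize it. Fix $n>0$ and recall $\beta=\deg A$. Partitioning the solutions of height $n$ by their signature $S(P)$, the tree description surrounding \eqref{eq:nonfundsol} shows that every solution of height $n$ lies on a tree generated by the branching operations \eqref{eq:branching} whose root is one of the smallest-height non-fundamental triples \eqref{eq:nonfundsol} (the fundamental triples being recorded, exactly as the $d=1$ term in Lemma~\ref{lem:cbeta}). Hence
$$
\#\{P\in M_A(K[t]):h(P)=n\}=\sum_{T}\, r(T),
$$
where $T$ ranges over the signatures with $\max T=n$ occurring on the various $(\alpha,\beta)$-Euclid trees and $r(T)$ denotes the number of solutions whose signature is $T$.

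The heart of the argument is to show that $r(T)$ depends only on the root parameter $\alpha$ of the tree carrying $T$, and to evaluate it. First I would record that the branching operations \eqref{eq:branching} are invertible polynomial maps: by \eqref{eq:pred} the predecessor $\rho$ reconstructs the replaced coordinate from the other two, so a solution on a tree is determined by its root together with its position, and the position is faithfully encoded by $T$ through \eqref{eq:euclid_branching}. Consequently $r(T)$ equals the number of admissible roots \eqref{eq:nonfundsol} with $\deg f=\alpha$. Counting these amounts to choosing $f\in K[t]$ of degree exactly $\alpha$, which contributes $(q-1)q^{\alpha}$ (nonzero leading coefficient together with $\alpha$ free lower coefficients), multiplied by the finite discrete data in \eqref{eq:nonfundsol}---the sign $a=\pm1$, the sign $\pm$, and the placement of the two lower-degree coordinates at the root---which one shows contributes the constant factor $4$, giving $r(T)=4(q-1)q^{\alpha}$.

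Finally I would assemble the sum. By Lemma~\ref{lem:jlay} the signatures $T$ with $\max T=n$ on the $(\alpha,\beta)$-Euclid trees are in bijection, via $T=(b\alpha+(b-1)\beta,\dots)$, with the triples on the $(1,0)$-Euclid tree whose maximum is a divisor $d$ of $n+\beta$; the relation $n=d\alpha+(d-1)\beta$ forces $\alpha=\tfrac{n+\beta}{d}-\beta$ under the constraint $\beta d<n+\beta$, precisely as in the proof of Lemma~\ref{lem:cbeta}. For each admissible $d$ there are $E(d)$ such signatures, each contributing $r(T)=4(q-1)q^{(n+\beta)/d-\beta}$, so grouping by $d$ produces the stated formula.

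The main obstacle is the middle step: establishing that $r(T)$ is genuinely constant along a fixed tree and pinning down the exact constant. Constancy requires checking that the branching maps \eqref{eq:branching} induce bijections between the solution sets sitting over consecutive signatures---equivalently, that the leading-coefficient relation $a_z=a\,a_x a_y$ from the proof of Lemma~\ref{lem:orbit} and the lower-order coefficients propagate reversibly---so that no solution is created or destroyed as one moves through the tree. Determining the constant is the delicate part: the two lower-degree coordinates at the root of \eqref{eq:nonfundsol} are interchanged by a \emph{simultaneous} change of the sign $a$, so the discrete choices are not independent, and the careful bookkeeping needed to count each element of $M_A(K[t])$ exactly once---and thereby to arrive at the factor $4$ rather than an over- or under-count---is where the real work lies.
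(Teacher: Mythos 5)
Your overall strategy is the same as the paper's: weight each signature of height $n$ by the number of Markoff triples realizing it, use Lemma~\ref{lem:jlay} and the divisor analysis from Lemma~\ref{lem:cbeta} to index the signatures by divisors $d$ of $n+\beta$ with $\beta d<n+\beta$ and $\alpha=(n+\beta)/d-\beta$, and multiply by $(q-1)q^{\alpha}$ for the choices of $f$. The gap is in the one step you yourself flag as ``where the real work lies'': the constant $4$ is never actually established, and the source you propose for it is wrong. Since $A$ is non-constant, Lemma~\ref{lem:funsol1} says the only fundamental triples are $(0,\pm if,f)$, so the only roots available from \eqref{eq:nonfundsol} are of the first form $(f,iaf,iaAf^2)$; there is no second, independent ``$\pm$'' sign (that sign belongs to the second form, which exists only for constant $A$ via Lemma~\ref{lem:funsol2}). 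So your discrete data give only the factor $2$ from $a=\pm1$, not $4$.

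The missing factor of $2$ comes from a point your argument explicitly denies: the position in the Markoff tree is \emph{not} faithfully encoded by the signature $T$. The root signature $(\alpha,\alpha,2\alpha+\beta)$ has its first two entries equal, so $\Gamma_{\beta,1}$ and $\Gamma_{\beta,2}$ agree on it; consequently the Markoff tree rooted at $\sigma_1(R)$ is a full binary tree with $2^j$ nodes at layer $j$, while the $(\alpha,\beta)$-Euclid tree has only $2^{j-1}$ distinct signatures at layer $j\geq 1$ (compare Figure~\ref{fig:Markofftreepol} with Figure~\ref{fig:10euclid}). Each signature on layer $j\geq 1$ is therefore realized by exactly two distinct Markoff triples per root, and the paper obtains $4=2\times 2$ as (two choices of $a$) times (this two-to-one degeneracy), not from two independent signs in the root. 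Your proposal loses this factor by assuming signature determines position, then spuriously recovers it from a sign that is unavailable for non-constant $A$; and the $d=1$ term (that the fundamental triples of height $n$ contribute $4(q-1)q^{n}$) is asserted without verification. Until the bookkeeping you defer is actually carried out, the central constant in the formula is unproved.
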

\begin{proof}
 We follow parts of the argument used  in Lemma \ref{lem:cbeta}. Since $A$ is non-constant, from Lemma \ref{lem:funsol1} we have two choices for the form of a fundamental triple
 $$
 R=(0,\pm if, f),
 $$
  with $f\in K[t]\backslash K$. Both forms have signature $(-\infty,\alpha,\alpha)$ and yield an $(\alpha,\beta)$-Euclid tree rooted at $(\alpha,\alpha,2\alpha+\beta)$. When one is interested in counting Markoff triples instead of their signatures, the equality
  $$
  \Gamma_{\beta,1}(\alpha,\alpha,2\alpha+\beta)=\Gamma_{\beta,2}(\alpha,\alpha,2\alpha+\beta)
  $$
needs to be taken into account.  This issue is evident when you compare Figure \ref{fig:Markofftreepol} with Figure \ref{fig:10euclid}. Except for the $0$-th layer, every layer in Figure \ref{fig:Markofftreepol}  has twice the number of triples than the respective layer in Figure \ref{fig:10euclid}. Therefore, for a fixed $f$, we need to associate four $(\alpha,\beta)$-Euclid trees to the Markoff tree rooted at $\sigma_1(R)$.
  
Consequently, if we fix $n>0$ and $\deg f=\alpha$, we have $4E(d)$ non-fundamental Markoff triples $P$ with $h(P)=n$, where $d>1$ is an integer satisfying
\begin{equation}\label{eq:ndal}
  n=d\alpha+(d-1)\beta,
\end{equation}
  and $\beta d<n+\beta$. Moreover, \eqref{eq:ndal} shows that $d\mid(n+\beta)$ and
  $$
 \alpha=\dfrac{n+\beta}{d}+\beta.
  $$
  Since there are $(q-1)q^{\alpha}$ polynomials $f$ with $\deg f=\alpha$, we have that the number of non-fundamental Markoff triples with height $n$ is
\begin{equation}\label{eq:qd}
 \sum_{\genfrac{}{}{0pt}{}{1\neq d \mid (n + \beta)}{\beta d <{n + \beta}}} 
 4(q-1)q^{\alpha} E (d)= 4(q-1)\sum_{\genfrac{}{}{0pt}{}{1\neq d \mid (n + \beta)}{\beta d <{n + \beta}}} 
 q^{\frac{n + \beta}{d}-\beta} E (d).
\end{equation}

Given the form of $R$, it follows that the number of fundamental Markoff triples of height $n$ is
\begin{equation}\label{eq:qdum}
4(q-1)q^n= 4(q-1)q^{\frac{n + \beta}{d}-\beta}E(d)
\end{equation}
if we take $d=1$. Combining \eqref{eq:qd} and \eqref{eq:qdum}, we arrive at
  $$
\#\{(x,y,z)\in M_A(K[t]):h(x,y,z)=n\}=4(q-1)\sum_{\genfrac{}{}{0pt}{}{d \mid (n + \beta)}{\beta d <{n + \beta}}} 
 q^{\frac{n + \beta}{d}-\beta} E (d),
$$
as desired.
\end{proof}

When $A$ is a non-zero constant and $K$ is finite, a  result similar to the previous one can be proved. We leave the details to the interested reader.

\section{Acknowledgments}
This work was supported, in part, by the Cross-Disciplinary Science Institute at Gettysburg College (X-SIG)

\bibliographystyle{alpha}

\end{document}